\newtheorem {theorem}{Theorem}
\newtheorem {lemma}[theorem]{Lemma}
\newtheorem {corollary}[theorem]{Corollary}
\newtheorem {definition}[theorem]{Definition}
\newtheorem {question}[theorem]{Question}
\theoremstyle{remark}
\newtheorem {remark}[theorem]{Remark}
\numberwithin{equation}{section}
\numberwithin{theorem}{section}
\newcommand{\comments}[1]{}
\newcommand{\spinc}{$\mathrm{Spin}^c\;$}
\title{Contact Structures on AR-singularity links}
\author{\c{C}a\u gr{\i} Karakurt }
\address{Department of Mathematics, Bo\u{g}azi\c{c}i University, Bebek 34342}
\email{cagri.karakurt@boun.edu.tr}
\author{Fer\.{\i}t \"Ozt\"urk}
\address{Department of Mathematics, Bo\u{g}azi\c{c}i University, Bebek 34342}
\email{ferit.ozturk@boun.edu.tr}
\date{}
\begin{document}

\begin{abstract}

An isolated complex surface singularity induces a canonical contact structure on its link. In this paper, we initiate the study of the existence problem of Stein cobordisms between these contact structures depending on the properties of singularities. As a first step we construct an explicit Stein cobordism from any contact 3-manifold to the canonical contact structure of a proper almost rational singularity introduced by N\'emethi. We also show that the construction cannot always work in the reverse direction: in fact the U-filtration depth of contact Ozsv\'ath-Szab\'o invariant obstructs the existence of a Stein cobordism from a proper almost rational singularity to a rational one. Along the way, we detect the contact Ozsv\'ath-Szab\'o invariants of those contact structures fillable by a AR plumbing graph, generalizing an earlier work of the first author.
 
\end{abstract}
\maketitle

\section{Introduction}

Let $Z$ denote a complex analytic surface in $\mathbb{C}^N$ which has an isolated singularity at the origin. By intersecting $Z$ with a small sphere centered at 0, we get a closed oriented $3$-manifold $M$, which is called the link of the singularity. The distribution of complex tangencies on $M$ is a contact distribution. In general the $3$-manifold $M$ may be the link of many other analytically distinct isolated singularities, but the induced contact structures are known to be contactomorphic \cite{CNP}. If a $3$-manifold is realized as a link of an isolated singularity, then the associated contact structure is called the canonical contact structure on $M$.  Properties of these contact structures have been extensively studied in the literature  \cite{AkhO, BO,LO}.
One remarkable feature of  canonical contact structures is that they are all Stein fillable;  a deformation of the minimal resolution of a normal surface singularity and the Milnor fiber of a smoothable singularity determine Stein fillings of the canonical contact structure.

The purpose of the present paper is to address the existence problem of Stein cobordisms between canonical contact structures on the link manifolds of various classes of singularities. This is also related to the problem of symplectically embedding one Milnor fiber into another. As a first step we work on almost rational (AR) singularities, which have been introduced by A.~N\'emethi as an extension of rational singularities. Recall that a complex surface singularity is rational if its geometric genus is zero. M.~Artin found out that this is  equivalent to the case when any positive divisorial 2-cycle in a resolution of the singularity has nonpositive arithmetic genus; besides this last condition is independent of the resolution \cite{Art}. N\'{e}methi investigated the behavior of the arithmetic genus function (which is equal to $1-\chi$ where $\chi$ is as in \eqref{eqn:chi}) on the lattice of homology 2-cycles of a resolution of a normal surface singularity. Through his observation, he deduced that if a normal surface singularity is rational then  its link manifold $M$ is an L-space
\cite[Theorem~6.3]{N}; i.e. $M$ is a rational homology sphere and its Heegaard Floer homology is isomorphic  to that of a lens space. The converse of this statement was also  proved recently in \cite{N4}.

Now, an AR-singularity is one which admits a good resolution whose dual graph is a negative definite, connected tree and has the following property: by reducing the weight on a vertex 
we get the dual graph of a rational singularity. This property allows one to compute the Heegaard Floer homology of the link by a combinatorial process similar to Laufer's method of finding Artin's fundamental cycle \cite{N}. Even though the class of AR-singularities is restrictive, it is still large enough to contain the rational singularities and  all those singularities whose links are Seifert fibered rational homology spheres with negative definite plumbing graphs.  We call an almost rational singularity \emph{proper AR} if it is not rational.  After recalling some preliminaries on contact structures in Section~\ref{s:conpre} and on plumbings in Section~\ref{plumbing}, we present our first result in Section~\ref{construction} which claims the existence of a Stein cobordism from an arbitrary contact structure to the canonical contact structure of a proper AR-singularity.

\begin{theorem}\label{thm:steinAR}
Every closed, contact 3-manifold is Stein cobordant to the canonical contact structure of a proper AR-singularity.
\end{theorem}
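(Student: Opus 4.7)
The plan is to realize the desired Stein cobordism as a single Weinstein handlebody built on top of $(Y,\xi)$, whose upper boundary is the link of a carefully chosen proper AR singularity endowed with its canonical contact structure.

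First I would present $(Y,\xi)$ via Ding--Geiges as contact $(\pm 1)$-surgery on a Legendrian link $L_+ \sqcup L_- \subset (S^3, \xi_{\mathrm{std}})$. Working inside $(Y,\xi)$, I take a Legendrian pushoff of each component of $L_+$ and attach a Weinstein 2-handle along it with framing $\mathrm{tb}-1$. Since contact $(+1)$- and contact $(-1)$-surgery on Legendrian pushoffs cancel each other, the top of this partial Stein cobordism is the Stein fillable contact 3-manifold $(Y_0,\xi_0)$ obtained from $(S^3, \xi_{\mathrm{std}})$ by contact $(-1)$-surgery on $L_-$ alone. Its natural Weinstein filling is the 4-manifold $X_0$ whose Kirby diagram consists of 2-handles attached to $B^4$ along $L_-$ with Thurston--Bennequin minus one framings.

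Next I would attach further Weinstein 2-handles to extend $X_0$ into the plumbing 4-manifold $X_\Gamma$ of a proper AR graph $\Gamma$. Starting from the Kirby diagram of $X_0$, I would first convert it via standard Kirby moves into a tree plumbing diagram (introducing blowups as needed to resolve linking), and then attach chains of stabilized Legendrian unknots with very negative framings at appropriate vertices. Choosing the framings sufficiently negative both enlarges the graph until N\'emethi's AR criterion is satisfied and automatically provides the required $\mathrm{tb}-1$ Legendrian realization, so each attachment is Weinstein. A mild combinatorial adjustment guarantees that $\Gamma$ is not rational, so it is proper AR. The composite gives a Stein cobordism from $(Y,\xi)$ to some contact 3-manifold $(M_\Gamma, \xi')$, where $\xi'$ is the contact structure induced on $\partial X_\Gamma$ by the Stein structure we built.

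The crux is identifying $\xi'$ with the canonical contact structure $\xi_{\mathrm{can}}$ on $M_\Gamma$. My plan is to use the detection of contact Ozsv\'ath--Szab\'o invariants for contact structures fillable by an AR plumbing graph that the paper develops (generalizing the first author's earlier work). These invariants should characterize $\xi_{\mathrm{can}}$ uniquely among Stein fillable contact structures on $M_\Gamma$ compatible with the combinatorics of $\Gamma$, so that the identification reduces to an invariant computation on the explicit Weinstein handlebody. Matching the Stein structure we constructed with the singularity-theoretic canonical one is the step I expect to be the main obstacle; the rest is Kirby/Weinstein manipulation together with N\'emethi's combinatorial criteria for the AR and non-rational conditions.
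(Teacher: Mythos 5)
Your plan diverges from the paper's and, as stated, has a gap that I don't see how to close. The paper proves the theorem entirely on the level of open books: it stabilizes a supporting open book of $(M,\xi)$ to a one-boundary-component page of genus $g$, uses the chain relation to trade every left-handed Dehn twist in the monodromy for a word in right-handed Dehn twists along non-separating curves (paying with copies of $t_\delta$), and thereby reaches the monodromy $\phi_{g,n}=(t_{a_1}\cdots t_{a_{2g}})\,t_\delta^n$. Lemma~\ref{l:monostei} then produces the Stein cobordism, and Lemma~\ref{lem:sup} identifies $(S_g,\phi_{g,n})$ with a holomorphic Lefschetz fibration on the Milnor fiber of $x^2+y^{2g+1}+z^{(4g+2)n+1}=0$, so the target contact structure is canonical on the Brieskorn sphere by construction, without any invariant computation.

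The crux you flag is exactly where your argument breaks. Even granting all the Kirby/Weinstein bookkeeping, identifying the contact structure $\xi'$ you produce on $\partial X_\Gamma$ with $\xi_{\mathrm{can}}$ cannot be done by the contact-invariant detection of Section~\ref{HF}. Theorem~\ref{cagri} gives $\widetilde\Phi\circ\Phi_1(c^+(\xi'))=(c_1(J))^*$, where $J$ is whatever Stein structure your handle attachments yield, so you would first have to arrange $c_1(J)\sim K$, i.e.\ rotation number $-e_j-2$ at every vertex --- something your recipe of ``very negative framings plus stabilized Legendrian unknots'' does not control. But even that would not suffice: the contact Ozsv\'ath--Szab\'o invariant does not characterize $\xi_{\mathrm{can}}$ uniquely among Stein fillable structures compatible with $\Gamma$. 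The paper's own $\Sigma(2,3,11)$ example exhibits two such structures $\xi_\pm$ with $\sigma(\xi_\pm)=0$ and distinct nonzero $c^+$, of which only $\xi_+$ is canonical; agreement of $c^+$ (let alone of $\sigma$) does not give a contactomorphism. Separately, the step of turning the Kirby diagram of $X_0$ into a negative definite AR tree plumbing by ``standard Kirby moves and blowups'' while remaining inside the Weinstein category is asserted, not proved, and there is no reason the result should be negative definite or a tree at all for a general $L_-$. The paper's open-book route is precisely what avoids both obstacles.
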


The proof of  this theorem is an explicit construction of a Stein cobordism from the given contact manifolds to Brieskorn spheres.

By contrast there are obstructions to existence of Stein cobordisms going in the reverse direction. These obstructions come from Heegaard-Floer theory, particularly its plus flavor $HF^+$ \cite{OS4, OS5}. Recall that for any connected, closed, and oriented  $3$-manifold $M$ the Heegaard-Floer homology  group $HF^+(M)$ is a graded $\mathbb{F}[U]$-module, where $\mathbb{F}=\mathbb{Z}/2\mathbb{Z}$. Any oriented cobordism between two such manifolds induces a graded $\mathbb{F}[U]$ module homomorphism between the corresponding Heegaard-Floer homology groups. 

To any co-oriented contact structure $\xi$ on a $3$-manifold $M$, there is an associated element $c^+(\xi)\in HF^+(-M)$, which is a contactomorphism invariant of the contact structure and 
is natural under Stein cobordisms \cite{OS6}. In particular if $\xi$ is Stein fillable then $c^+(\xi)$ does not vanish since the tight structure on $S^3$ has non-vanishing  $c^+$. Moreover, $U(c^+(\xi))=0$. Utilizing this property, one can generate a numerical invariant of contact structures by letting
$$ \sigma(\xi)=-\mathrm{Sup} \left \{d\in \mathbb{N}\cup\{0\}:c^+(\xi)\in U^d \cdot HF^+(-M) \right \}.$$
The first author showed that $\sigma$ is monotone under Stein cobordisms and  can take all the values in the set $\{0,-1,-2,\dots,-\infty \}$ \cite{K}.   The computation of $\sigma(\xi)$ is in general hard, nevertheless we are able to show that it is zero for the canonical contact structures of proper AR-singularities. The Stein cobordism obstructions are obtained as immediate corollaries of this observation.

\begin{theorem}
\label{nocob}
Let $(M,\xi)$ be the canonical contact link manifold of a proper AR-singularity. Then  $\sigma(\xi)=0$. Hence there is no Stein cobordism from $\xi$ to 
\begin{enumerate}
\item any contact structure supported by a planar open book,
\item any contact structure on the link of a rational singularity, or
\item any contact structure with vanishing Ozsv\'ath-Szab\'o invariant.
\end{enumerate}
\end{theorem}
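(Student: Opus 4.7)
The plan is first to reduce the theorem to the numerical claim $\sigma(\xi)=0$; the three cobordism non-existence statements then follow uniformly from the monotonicity of $\sigma$ under Stein cobordisms proved in \cite{K}. Indeed, if there were a Stein cobordism from $(M,\xi)$ to a contact structure $(M',\xi')$ then $\sigma(\xi)\le\sigma(\xi')$, so it suffices to establish $\sigma(\xi')<0$ in each of the three target classes, contradicting $\sigma(\xi)=0$. Case (3) is immediate from the definition. For Case (2), the link of a rational singularity is an L-space by N\'emethi's theorem recalled in the introduction, so $HF^+$ is concentrated in infinite $U$-towers, every nonzero element is infinitely $U$-divisible, and $\sigma=-\infty$. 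Case (1) invokes the standard consequence of planarity that $c^+(\xi')$ lies in the image of the natural map $HF^\infty(-M')\to HF^+(-M')$, equivalently vanishes in $\HFred$, which again forces infinite $U$-divisibility and $\sigma=-\infty$.

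The heart of the argument is therefore the computation $\sigma(\xi)=0$. For this I would appeal to the explicit description of $c^+(\xi)$ for contact structures fillable by an AR plumbing graph developed earlier in the paper (which generalizes the Brieskorn-sphere computation of \cite{K}). Using N\'emethi's lattice-cohomology / graded-root model of $HF^+(-M)$, the invariant $c^+(\xi)$ is identified with a specific generator attached to a distinguished characteristic element of the plumbing lattice, and the task is to show that this generator is not in $U\cdot HF^+(-M)$.

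The key combinatorial step is to read off non-$U$-divisibility directly from the AR structure. A proper AR graph becomes rational only after a nontrivial weight reduction, and the associated graded root accordingly acquires, relative to the rational case, one or more reduced branches sitting above the infinite $U$-tail; the detection theorem places $c^+(\xi)$ at the top vertex of such a branch, i.e.\ at a local maximum of the graded root, which is by construction not in the image of $U$. Then $\sigma(\xi)=0$ follows at once from the definition and the three corollaries close out the theorem as above. I expect the main obstacle to be the precise localization of $c^+(\xi)$ in the graded root: one must simultaneously rule out that it sits on the single infinite $U$-tower (which would correspond to the rational outcome $\sigma=-\infty$) and that it sits strictly below another reduced generator. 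The AR-versus-rational comparison arising from Laufer-type weight reduction, which underlies the detection theorem, should supply the required control.
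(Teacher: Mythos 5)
Your overall strategy matches the paper's: reduce the three cobordism obstructions to the numerical claim $\sigma(\xi)=0$ via monotonicity of $\sigma$ under Stein cobordisms, and locate $c^+(\xi)$ in N\'emethi's graded-root model via the detection results for AR plumbings. The vanishing of $\sigma$ on the three target classes is also handled correctly (L-space for rational links, planarity forcing $c^+$ into the image of $HF^\infty\to HF^+$, and triviality of the invariant).

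The gap is in the sentence claiming that the class ``is by construction not in the image of $U$.'' Being supported at a leaf (root vertex) of the graded root only places $\widetilde{\Phi}\circ\Phi_1(c^+(\xi))$ in $\mathrm{Ker}\,U$; it does not by itself preclude $U$-divisibility. Concretely, if the branch from the leaf $\overline{v_0^{\tau(0)}}$ up to the first branching vertex has length $\ell$, the corresponding generator lies in $\mathrm{Im}\,U^{\ell-1}$ and one would only obtain $\sigma(\xi)=-(\ell-1)$; and if it is the \emph{unique} leaf the generator is infinitely $U$-divisible and $\sigma(\xi)=-\infty$. The substance of the paper's proof is exactly to pin the branch length to one: it uses $\tau(0)=0$, the direct calculation $\tau(1)=\chi_K(b_0)=-(K(b_0)+e_0)/2=1$, N\'emethi's structure theorem for $\tau$ in the canonical $\mathrm{Spin}^c$ structure ($\tau(i)\le 1$ for $i\le N-1$, and $\tau$ monotone increasing once it exceeds $1$), and the properness hypothesis to conclude that $\tau$ is not monotone increasing and hence drops to some value $\le 0$ at some index $n$. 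This forces $\overline{v_0^1}=\overline{v_n^1}$ while $\overline{v_0^0}\neq\overline{v_n^0}$, i.e.\ the branch has length exactly one. Your second worry (``sits strictly below another reduced generator'') is precisely this branch-length estimate, and it is not supplied automatically by the AR-versus-rational comparison; you need to make the computation $\tau(1)=1$ and the appeal to the structure of $\tau$ explicit.
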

In the course of proving this theorem we give an explicit method to detect the contact invariant $c^+(\xi)$ in the homology of  graded roots of A.~N\'emethi, if $\xi$ satisfies a certain compatibility condition with a plumbing graph.  

 To detect $c^+(\xi)$ in the homology of  graded roots, one should understand the isomorphism 
\begin{equation} 
\Phi:HF^+(-M(\Gamma),\mathfrak{t}_{\mathrm{can}})\to \mathbb{H}(R_{\tau}).
\label{Fi}
\end{equation} 
Here the left side is the Heegaard Floer homology of the 3-manifold described via an $AR$-plumbing graph $\Gamma$ in the canonical \spinc structure $\mathfrak{t}_{\mathrm{can}}$. The right side is the homology of a graded root $R_{\tau}$, which is much easier to compute. In Section~\ref{HF} we describe the isomorphism $\Phi$ explicitly along with the required algebraic objects, tell how $c^+$ is detected in graded roots and finally prove Theorem~\ref{nocob}. In Section~\ref{examples} we present explicit examples.

\section{Contact Preliminaries}\label{s:conpre}

In this section we mention some background material about contact geometry. Our main purpose is to set up our terminology. For a thorough discussion see \cite{OzbStip,Gei, CE}.

Contact structures on 3-manifolds can be studied topologically via open books through the Giroux corespondence.
We say that a contact structure $\xi$ on $M$ is compatible with an open book in $M$ if 
on the oriented binding of the open book a contact form of $\xi$ is a positive volume form  and
away from the binding  $\xi$ can be isotoped through contact structures to be arbitrarily close  to the tangent planes of the pages of the open book. 
Giroux correspondence states  that this compatibility relation is in fact a one-to-one correspondence between contact 3-manifolds up to contact isotopy and open books up to positive stabilizations.  

Suppose $(W,J)$ is a compact complex surface with oriented boundary $-M_1\cup M_2$ that admits a strictly plurisubharmonic Morse function $\phi:W\to[t_1,t_2]$ such that $M_i=\phi^{-1}(t_i)$, for $i=1,2$, and $-dJ^*d\phi$ is a symplectic form. Then the set of complex tangencies on $M_i$ constitute a contact structure $\xi_i$, for $i=1,2$. In this case we say that $W$ is a Stein cobordism from $(M_1,\xi_1)$ to  $(M_2,\xi_2)$. If $M_1=\emptyset$ we say that $W$ is a Stein filling for  $(M_2,\xi_2)$.

Establishing the existence of a Stein cobordism between given contact manifolds is a delicate problem. Etnyre and Honda proved that an overtwisted contact 3-manifold is Stein cobordant to any contact $3$-manifold and that for any contact $3$-manifold $M$ there is a Stein fillable one to which $M$ is  Stein cobordant  \cite{EH}. The obstructions usually utilize Floer type theories. The first author used Heegaard Floer homology to prove non-existence of Stein cobordisms between certain contact manifolds \cite{K}; see also Sections \ref{HF} and \ref{examples} below. Another powerful tool  which obstructs more generally exact symplectic cobordisms is Latschev and Wendl's algebraic torsion which was  built in the framework of symplectic field theory \cite{LW}. In the appendix of the same paper Hutchings described the same obstruction in  embedded contact homology. Translation of the latter in the Heegaard Floer setting was recently given by Kutluhan et al \cite{KMVW}.   

Starting with a contact $3$-manifold $(M_1,\xi_1)$ we can build a Stein cobordism as follows. First take the product $M_1\times [0,1]$ and equip it with the standard Stein structure. Then attach $1$-handles to the upper boundary.  Next attach $2$-handles along Legendrian knots in $M_1\sharp_n S^1\times S^2$ with framing one less than the contact framing.  By the topological characterization of Stein cobordisms  given by Eliashberg \cite{Eli}, see also \cite[Theorem~1.3]{Go}, the standard Stein structure on $M_1\times [0,1]$ extends uniquely  over such handles and all Stein cobordisms can be constructed this way. We are going to need a well-known partial translation of this recipe in terms of open books.

\begin{lemma}\label{l:monostei}
There exists a Stein cobordism from $(M_1,\xi_1)$ to $(M_2,\xi_2)$ if  there exist open books $(S_i,\phi_i)$ compatible with $(M_i,\xi_i)$ for $i=1,2$, such that $S_1$ and $S_2$ are homeomorphic surfaces and $\phi_2$ is obtained by multiplying  $\phi_1$  with right handed Dehn twists along non-separating curves.  
\end{lemma}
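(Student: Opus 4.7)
My plan is to reduce the claim to the case of a single right-handed Dehn twist and then iterate. Write $\phi_2 = D_{C_n} \circ \cdots \circ D_{C_1} \circ \phi_1$ with each $C_i$ a non-separating simple closed curve on the common page $S = S_1 = S_2$, and let $(N_i, \eta_i)$ be the contact 3-manifold compatible with the open book $(S,\, D_{C_i} \circ \cdots \circ D_{C_1} \circ \phi_1)$; by Giroux correspondence $(N_0, \eta_0) = (M_1, \xi_1)$ and $(N_n, \eta_n) = (M_2, \xi_2)$. If I can construct a Stein cobordism from $(N_{i-1}, \eta_{i-1})$ to $(N_i, \eta_i)$ for each $i$, I stack these to produce the desired cobordism from $(M_1,\xi_1)$ to $(M_2,\xi_2)$.

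For a single twist, start with an arbitrary $(N,\eta)$ compatible with $(S,\phi)$ and a non-separating curve $C \subset S$. First I realize $C$ on a page as a Legendrian knot $L \subset N$. A standard fact going back to Giroux, see for instance \cite{OzbStip, Gei}, identifies the Thurston--Bennequin framing of such a Legendrian with the page framing of $C$. Then I take the trivial Stein cobordism $N \times [0,1]$ from $(N,\eta)$ to itself and attach a single four-dimensional 2-handle along $L \subset N \times \{1\}$ with framing $\mathrm{tb}(L) - 1$. By the Eliashberg criterion \cite{Eli} already invoked in the paragraph preceding the lemma, the Stein structure extends uniquely over such a 2-handle, producing an honest Stein cobordism.

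The remaining step is to identify the new upper boundary as a contact manifold. Here I appeal to the well-known open-book handle calculus: attaching a 2-handle along a Legendrian page-realization of $C$ with framing one less than the page framing has the net effect, on the level of compatible open books, of composing the monodromy with a right-handed Dehn twist along $C$ while keeping the page $S$ unchanged. Consequently the new upper boundary is contactomorphic to the contact 3-manifold compatible with $(S,\, D_C \circ \phi)$, which in the inductive setup above is exactly $(N_i,\eta_i)$.

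The step I anticipate as the main obstacle is this last identification of the new upper boundary: rather than reprove it from scratch I would quote the explicit formulation from \cite{OzbStip}, which records precisely this translation between Legendrian $(-1)$-surgery on page curves and right-handed Dehn twists in the monodromy. The non-separating hypothesis on each $C_i$ is used here to avoid any framing or orientation ambiguity in the page-realization. Concatenating the building blocks across $i=1,\ldots,n$ then assembles the Stein cobordism claimed by the lemma.
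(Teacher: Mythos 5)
Your proposal is correct and takes essentially the same approach as the paper: Legendrian-realize each non-separating curve on a page so that contact (Thurston--Bennequin) framing matches page framing, attach a Stein 2-handle with framing one less, and use the standard translation between such handle attachments and right-handed Dehn twists in the monodromy. The only difference is presentational --- you spell out the induction one twist at a time and cite the textbook sources for the Legendrian realization principle, while the paper cites Honda's Theorem~3.7 directly and treats all the twists at once.
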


\begin{proof}
By Legendrian realization principle \cite[Theorem~3.7]{H}, each non-separating curve on a page can be isotoped to a Legendrian curve staying on the same page such that the page framing and the contact framing agree. Then adding right handed Dehn twists to the monodromy along these curves precisely corresponds  to adding a Stein handle along the Legendrian curves. 
\end{proof}
\section{Preliminaries on Plumbings}
\label{plumbing}
Let $\Gamma$ be a weighted connected tree with vertices $\{b_j: j \in \mathcal{J}\}$ for some finite index set $\mathcal{J}$. Let $e_j$ denote the weight of the $j$th vertex  $b_j$ for all $j\in \mathcal{J}$. We construct a $4$-manifold $X(\Gamma)$ as follows: For each vertex $b_j$,  take a disk bundle over a sphere whose Euler number is $e_j$ and plumb two of these together whenever there is an edge connecting the vertices. We denote the boundary 3-manifold by $M(\Gamma)$. We shall call such a graph  a plumbing graph. These graphs naturally arise in  singularity theory as good dual resolution graphs where   vertices correspond to  exceptional divisors  and the edges represent their intersection. See \cite{N3} for a survey on this topic. 

\subsection{The lattices $L$ and $L'$} The 4-manifold $X(\Gamma)$ admits a handle decomposition without $1$-handles, so the second homology group  $L=H_2(X(\Gamma);\mathbb{Z})$ is freely generated by the the fundamental classes of the zero-sections of the disk bundles in the plumbing construction. Hence we have a generator of $L$ for each vertex of $\Gamma$. By abuse of notation we denote the generator in $L$ corresponding  to the vertex $b_j$ by the same symbol. The intersection form $(\cdot,\cdot)$ on $L$ is a symmetric bilinear function naturally characterized by $\Gamma$.

It is known that a plumbing graph can be realized as a good dual resolution graph of a singularity if and only if the intersection form is negative definite. Henceforth, we shall assume that all the plumbing graphs we consider satisfy this property. Therefore $L$ is a lattice and we have the short exact sequence
\begin{equation}\label{eq:dia1}
\begin{CD}
0 @>>> L  @>PD>>L' @>>> H @>>>0 
\end{CD}
\end{equation}
where $L'$ is the dual lattice $\mathrm{Hom}_{\mathbb{Z}}(L,\mathbb{Z})\cong H^2(X(\Gamma);\mathbb{Z})\cong H_2(X(\Gamma),M(\Gamma);\mathbb{Z}) $  with $PD(x)=(x,\cdot)$, and $H=H_1(M(\Gamma);\mathbb{Z})$ .

We say that  $k\in  L'$ is \emph{characteristic} if for every vertex $b_j$ of $\Gamma$, 
we have $k(b_j)+e_j\equiv 0 \text{ mod } 2$. Let $\mathrm{Char}(\Gamma)$ denote the set of characteristic elements in $L'$. The lattice $L$ naturally acts on $\mathrm{Char}(\Gamma)$ by the rule $x\ast k =k +2PD(x)$ for every $x\in L$.

The characteristic cohomology class $K\in L'$ 
satisfying $K(b_j)=-e_j-2$ for all $j\in \mathcal{J}$ is called the canonical class. 
For each $k\in \mathrm{Char}(\Gamma)$, we define the function $\chi_k:L\to\mathbb{Z}$ by
\begin{equation}
\label{eqn:chi}
\chi_k(x)=-(k(x)+(x,x))/2.
\end{equation}
We simply use the symbol $\chi$ to denote $\chi_K$. 

\subsection{\spinc structures}\label{s:spinc}
Every element of $\mathrm{Char}(\Gamma)$ uniquely defines a \spinc structure on $X(\Gamma)$. Two such \spinc structures induce the same \spinc structure on $M(\Gamma)$ if and only if the corresponding characteristic cohomology classes are in the same $L$-orbit.
For a fixed \spinc structure $\mathfrak{t}$ of $M(\Gamma)$, let $\mathrm{Char}(\Gamma,\mathfrak{t})$ denote the set of all characteristic cohomology classes which restrict to $\mathfrak{t}$ on $M(\Gamma)$.  Since $M(\Gamma)$ is a rational homology sphere,  it has finitely many \spinc structures; these are in one-to-one correspondence with elements of $H_1(M(\Gamma),\mathbb{Z})$. We can express $\mathrm{Char}(\Gamma)$ as a disjoint union of finitely many $\mathrm{Char}(\Gamma,\mathfrak{t})$s. In this paper, the default  \spinc structure, denoted by $\mathfrak{t}_{\mathrm{can}}$, is the one induced by the canonical class $K$.

\subsection{Almost Rational Graphs} 
A plumbing graph $\Gamma$ is said to be \emph{rational}  if $\chi(x)\geq 1$ for any $x>0$.  Call a vertex $b_j$ in a plumbing graph a bad vertex if    $-e_j$ is greater than the valency of the vertex.  It is known that graphs with no bad vertices are rational. Moreover, rational graphs are closed under decreasing weights and taking subgraphs. 
 
A graph is said to be \emph{almost rational} (or AR for short) if by decreasing the weight of a vertex $b_0$, we get a rational graph. Note that such a distinguished vertex $b_0$ need not be unique. Graphs with at most one bad vertex are AR \cite[Section~8]{N}. In particular  plumbing graphs of Seifert fibered rational homology spheres are AR. A plumbing graph is said to be proper almost rational if it is almost rational but not rational. 

We say that a surface singularity is rational (respectively, AR and proper AR) if it admits a good resolution graph which is rational (respectively AR and proper AR). For example, for pairwise relatively prime integers $p,q$ and $r$, the link of the Brieskorn singularity
\begin{equation}\label{eq:Brieskorn}
x^p+y^q+z^r=0,
\end{equation}
is a Seifert fibered integral homology sphere which is  called  Brieskorn sphere $\Sigma(p,q,r)$. Such a singularity is proper AR unless $(p,q,r)=(2,3,5)$ \cite{R2,E,N}.

\section{Construction of Stein Cobordisms}
\label{construction}
The purpose of this section is to prove Theorem \ref{thm:steinAR}. In fact we will show that the proper AR-singularity in the  theorem can be chosen as a Brieskorn singularity.  For arbitrary positive integers $g$ and $n$, consider the triple $(p,q,r)$ with 
$$p=2,\quad q=2g+1,\text{ and } r=(4g+2)n+1.$$
\noindent Note that $p,q,$ and $r$ are pairwise relatively prime for all $g$ and $n$, so the corresponding Brieskorn sphere $\Sigma(p,q,r)$ is an integral homology sphere, and the corresponding Brieskorn singularity is proper AR. 

We shall describe an abstract open book which supports the canonical contact structure on $\Sigma(p,q,r)$. Let $S_g$ denote a compact orientable surface of genus $g$ with one boundary component. Let $\phi_{g,n}$ denote the diffeomorphism on $S_g$ which is a product of right handed Dehn twists along simple closed curves given by
\begin{equation}\label{eq:monodromy}
\phi_{g,n}=(t_{a_1}t_{a_2}\dots t_{a_{2g}})^{(4g+2)n+1},
\end{equation}
\noindent where the curves $a_1,a_2,\dots,a_{2g}$ form a chain as in Figure \ref{fig:Page_Chain}
\begin{figure}[h]
	\includegraphics[width=0.60\textwidth]{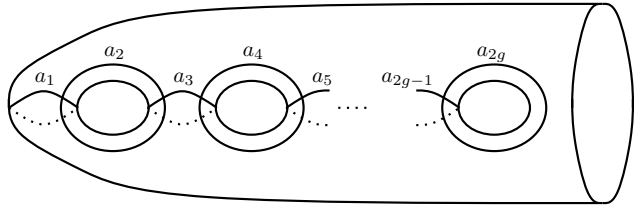}
	\caption{Curves $a_1,\dots a_{2_g}$ which form a chain on $S_g$.}
	\label{fig:Page_Chain}
\end{figure}

The chain relation \cite[Proposition 4.12]{FM} tells that 
\begin{equation}\label{eq:chain}
(t_{a_1}t_{a_2}\dots t_{a_{2g}})^{(4g+2)}=t_\delta 
\end{equation}
\noindent where $\delta$ is a simple closed curve on $S_g$ which is parallel to the boundary component. Since $\delta$ does not intersect with any of $a_1,\dots,a_{2g}$, the Dehn twist $t_\delta$ commutes with all of $t_{a_1},\dots,t_{a_{2g}}$.  Hence $\phi_{g,n}$ can also be written as 
$$\phi_{g,n}=(t_{a_1}t_{a_2}\dots t_{a_{2g}})t_\delta ^n $$   
\begin{lemma}\label{lem:sup}
The canonical contact structure on $\Sigma (p,q,r)$ is supported by $(S_g,\phi_{g,n})$. 
\end{lemma}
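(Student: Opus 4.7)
The plan is to apply the Giroux correspondence and separately verify (i) that the open book $(S_g,\phi_{g,n})$ describes $\Sigma(p,q,r)$ as a smooth $3$-manifold, and (ii) that the compatible contact structure is the canonical one.

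For (i), I would first recognize the base open book $(S_g, t_{a_1}t_{a_2}\cdots t_{a_{2g}})$ as the Milnor open book of the plane curve singularity $x^2+y^{2g+1}=0$: the page $S_g$ is diffeomorphic to the Milnor fiber of that singularity, the binding is the $(2,2g+1)$-torus knot $T_{2,2g+1}\subset S^3$, and the geometric monodromy factors along the vanishing cycles as the product of right-handed Dehn twists along the chain $a_1,\ldots,a_{2g}$. By the chain relation~\eqref{eq:chain}, $\phi_{g,n}=(t_{a_1}\cdots t_{a_{2g}})\cdot t_{\delta}^{\,n}$, so $\phi_{g,n}$ differs from the Milnor monodromy by $n$ right-handed Dehn twists along a binding-parallel curve. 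Composing the monodromy with $t_\delta$ is a standard operation that changes the underlying $3$-manifold by performing $-1$ Dehn surgery (with respect to the page framing, which here coincides with the Seifert framing of $T_{2,2g+1}$) on a parallel copy of the binding. Applying this $n$ times and invoking Moser's classification of Dehn surgeries on torus knots, the underlying $3$-manifold is identified as $\Sigma(2,2g+1,2(2g+1)n+1)=\Sigma(p,q,r)$.

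For (ii), I would exploit the Loi--Piergallini/Akbulut--Ozbagci correspondence, which assigns to the positive factorization $(t_{a_1}\cdots t_{a_{2g}})\,t_\delta^{\,n}$ a Stein filling $W$ built from $D^4$ by attaching Stein $2$-handles along Legendrian realizations of the $a_i$'s (with contact framing equal to page framing minus one) together with $n$ Stein $2$-handles along parallel Legendrian push-offs of $\delta$. A Kirby calculus computation identifies $W$ with (a Stein deformation of) the Milnor fiber of the Brieskorn singularity $x^2+y^{2g+1}+z^r=0$. Since the canonical contact structure is, by definition, induced by the complex tangencies on the boundary of any such analytic model, and since Caubel--N\'emethi--Popescu-Pampu's result \cite{CNP} guarantees that the canonical contact structure is independent of the chosen analytic representative, the contact structure supported by $(S_g,\phi_{g,n})$ is contactomorphic to the canonical contact structure on $\Sigma(p,q,r)$.

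The main obstacle is the matching step in (ii). A positive factorization a priori only guarantees that the compatible contact structure is Stein fillable, but Brieskorn spheres can admit several distinct tight Stein fillable contact structures, so this alone does not pin down the canonical one. The essential work is therefore the Kirby calculus computation identifying the handlebody $W$ with the Brieskorn Milnor fiber (or, equivalently, with a standard thickening of the minimal resolution), ensuring that the induced contact structure on the boundary is precisely the one obtained from complex tangencies rather than some other tight filling.
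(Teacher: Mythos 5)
Your proposal splits the claim into a topological part (i) and a contact part (ii). Part (i) — identify the base open book $(S_g,t_{a_1}\cdots t_{a_{2g}})$ with the Milnor open book of the curve singularity $x^2+y^{2g+1}=0$ and then use the fact that twisting by $t_\delta^n$ corresponds to $-1/n$ surgery on $T(2,2g+1)$ — is correct and coincides with the alternative route the paper sketches in the remark following Lemma~\ref{lem:sup}. This is a genuinely different path from the paper's main proof.

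The problem is part (ii), and it is a substantial gap, not merely an unfinished computation. You propose to build the Lefschetz filling $W$ via Loi--Piergallini/Akbulut--Ozbagci from the positive factorization and then to identify it with ``a Stein deformation of'' the Brieskorn Milnor fiber by Kirby calculus. But Kirby calculus is a purely smooth tool: it can only show that $W$ and the Milnor fiber are diffeomorphic as $4$-manifolds. It cannot show that the Stein structure coming from the abstract Lefschetz fibration is deformation equivalent to the complex structure on the Milnor fiber, and diffeomorphic Stein domains need not induce contactomorphic boundary contact structures. Invoking \cite{CNP} does not help either, since that result compares different \emph{analytic} realizations of the link; it says nothing about an a priori unrelated Stein structure on the same smooth $4$-manifold. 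So the parenthetical ``(a Stein deformation of)'' in your proposal is exactly the missing content, and the method you propose to supply it cannot deliver it.

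The paper avoids this issue by constructing the Lefschetz fibration \emph{holomorphically on the Milnor fiber itself}: since $p=2$, the Milnor fiber $M(p,q,r)$ is a double cover of $B^4$ branched along the curve-singularity Milnor fiber $M(q,r)$, and composing with the projection $\pi_2\colon B^4\to B^2$ yields (by Loi--Piergallini) a holomorphic Lefschetz fibration whose vanishing cycles are read off from the braid presentation of $T(q,r)$. Because the fibers are complex submanifolds of a genuine analytic model, the induced open book on the boundary is automatically compatible with the canonical contact structure — no identification step is needed. If you want to keep the structure of your argument, the clean way to close your gap (as the paper's remark suggests) is to use the result of Neumann and Pichon \cite[Theorem~2.1]{NP} together with the chain relation; absent that, the key step is to exhibit a holomorphic Lefschetz fibration realizing the stated monodromy, which is precisely what the paper does.
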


\begin{proof} Consider the Milnor fiber $M(p,q,r)$ of the singularity \eqref{eq:Brieskorn}. By definition we have $\partial M(p,q,r)=\Sigma (p,q,r)$. 
We shall construct a holomorphic Lefschetz fibration on $M(p,q,r)$ whose fibers are diffeomorphic to $S_g$ and monodromy factorizes exactly as in \eqref{eq:monodromy}. Our argument will be based on a work of Loi and Piergallini \cite{LP}, which relates branched covers of Stein 4-manifolds to Lefschetz fibrations. 

Since $p=2$, the Milnor fiber $M(p,q,r)$ is a 2-fold branched cover of $B^4=\{ (y,z)\in\mathbb{C}^2\,:\, |y|^2+|z|^2\leq \epsilon \}$, branched along the Milnor fiber $M(q,r)$ of the plane curve singularity
\begin{equation}\label{eq:curve}
y^q+z^r=0
\end{equation}
\noindent Let $h:M(p,q,r)\to B^4$ be the covering map. It is well known that the link of the plane curve singularity \eqref{eq:curve} is the $(q,r)$-torus knot $T(q,r)$ in $S^3=\partial B^4$, and the Milnor fiber $M(q,r)$ is diffeomorphic to the minimal genus Seifert surface of $T(q,r)$ in $S^3$. Identify $B^4 \cong B^2\times B^2$  
using the complex coordinates $(y,z)$ and consider the projection map to the second factor $\pi_2:B^4\to B^2$. The restriction of this map to $M(q,r)$ is a 
simple $q$-fold branched covering 
whose singular points, called the twist points, are in one-to-one correspondence with crossings of $T(q,r)$ represented as a braid with $q$ strands and $r$ twists. Here it is important that all the crossings of $T(q,r)$ are positive otherwise charts describing simple branched cover around the twist points would not be compatible with the complex orientation on $M(q,r)$. Let $s_1,s_2,\dots,s_{qr}$ be the twist points on $M(q,r)$. Without loss of generality we may assume that they are mapped to distinct points  $z_1,z_2,\dots,z_{r}$ under $\pi_2$. By \cite[Proposition 1]{LP}, 
the composition $f:=\pi_2\circ  h$ is a holomorphic Lefschetz fibration whose set of singular values  is $\{z_1,z_2,\dots,z_{r}\}$.

Next we determine the regular fibers of the Lefschetz fibration $f:M(p,q,r)\to B^2$. Away from the twist points, any  disk $B^2\times \{ \mathrm{pt} \}$ in $B^4$ intersects $M(q,r)$ at $q$ points.  Since $q=2g+1$, each  such disk lifts under $h$ to a genus $g$ surface $F$   with one boundary component; this forms a regular fiber of $f$. The restriction map $h|_F:F\to B^2\times \{\mathrm{pt}\}$ 
is modeled by taking the quotient of $F$ by the hyperelliptic involution whose $2g+1$ fixed points map to $B^2\times \{ \mathrm{pt} \} \cap M(q,r)$. See the marked points on $F$ in Figure \ref{fig:braid_lift}. 

When a disk $B^2\times \{\mathrm{pt} \}$ intersects $M(q,r)$ at a twist point, its lift under $h$ is a singular fiber of $f$. We can describe the vanishing cycle of each singular fiber using the corresponding crossing of $T(q,r)$.  Any arc $\gamma$ on $B^2\times \{ \mathrm{pt}\}$
 connecting two strands of the braid $T(q,r)$ lifts to a unique  simple closed curve $\alpha$ on the regular fiber $F$. In Figure \ref{fig:braid_lift}, we indicated these arcs and curves using different colors. If a crossing exchanges these two strands connected by $\gamma$  then the corresponding singular fiber has vanishing cycle $\alpha$. Each such vanishing cycle contributes a right handed Dehn twist along $\alpha$  if the corresponding crossing is positive
(see \cite[Proposition 1]{LP} or \cite[Lemma 4.2]{BE}).
Following the braid direction we see that the monodromy is as described in \eqref{eq:monodromy}.
\begin{figure}[h]
	\includegraphics[width=0.50\textwidth]{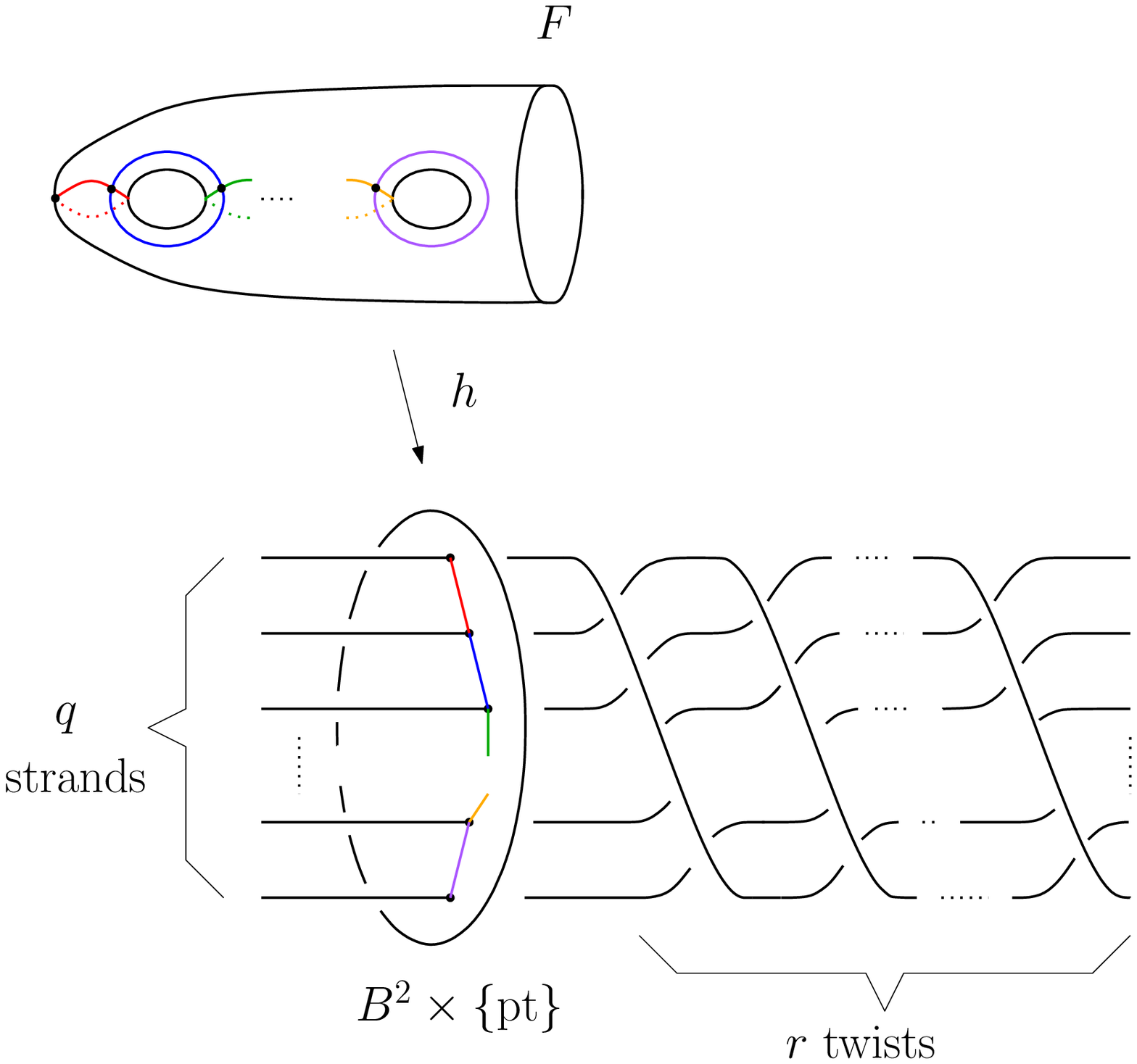}
	\caption{.}
	\label{fig:braid_lift}
\end{figure}

Having shown that the fibers of $f$ are diffeomorphic to $S_g$ and the total monodromy of $f$ agrees with $\phi_{g,n}$, we conclude that the restriction of $f$ on $\partial M(p,q,r)$ is the open book $(S_g,\phi_{g,n})$. Since the fibers of $f$ are complex submanifolds of $M(p,q,r)$, the open book $(S_g,\phi_{g,n})$ supports the canonical contact structure. 
\end{proof}

\begin{remark}
An alternative proof of the above lemma goes as follows: Using handlebody techniques and the fact that $\Sigma(p,q,pqn+1)$ is $-1/n$ surgery on $(p,q)$-torus knot, one can directly verify that the total space of the open book $(S_g,\phi_{g,n})$ is $\Sigma(2,2g+1,(2g+1)n+1)$. 
Using the chain relation and a result of W.~D.~Neumann and A.~Pichon \cite[Theorem 2.1]{NP}, one  can show that the open book supports the canonical contact structure.
\end{remark}

\begin{lemma}\label{lem:cob}
Given any contact 3-manifold $(M,\xi)$, there exist $g,n_0\in \mathbb{N}$ such that for every $n\geq n_0$ there is  a Stein cobordism from $(M,\xi)$ to the canonical contact structure on the Brieskorn sphere $\Sigma(2,2g+1,(2g+1)n+1)$.
\end{lemma}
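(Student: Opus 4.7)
The plan is to apply Lemma \ref{l:monostei} with the open book $(S_g,\phi_{g,n})$ produced by Lemma \ref{lem:sup}. By the Giroux correspondence together with repeated positive stabilization, for every sufficiently large $g$ the contact manifold $(M,\xi)$ is supported by some open book $(S_g,\phi)$ whose page has genus $g$ and one boundary component; the monodromy $\phi\in\mathrm{Mod}(S_g,\partial)$ can be arbitrary. It will then suffice, by Lemma \ref{l:monostei}, to show that $\psi_n:=\phi^{-1}\phi_{g,n}$ is a product of right-handed Dehn twists along non-separating simple closed curves for all sufficiently large $n$.

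Writing $\phi_{g,n}=T\cdot t_\delta^n$ with $T=t_{a_1}\cdots t_{a_{2g}}$ and using that $t_\delta$ lies in the center of $\mathrm{Mod}(S_g,\partial)$, I can rewrite $\psi_n = t_\delta^{n}\cdot(\phi^{-1}T)$. The heart of the argument is the following algebraic claim: for every $\eta\in\mathrm{Mod}(S_g,\partial)$ with $g\geq 1$, there exists $a=a(\eta)\geq 0$ such that $t_\delta^{a}\cdot\eta$ is a positive product of Dehn twists along non-separating curves. Applied to $\eta=\phi^{-1}T$ and setting $n_0:=a(\eta)$, I then factor $\psi_n=t_\delta^{n-n_0}\cdot(t_\delta^{n_0}\eta)$ for $n\geq n_0$. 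The second factor is a positive product along non-separating curves by the claim, while the chain relation $(t_{a_1}\cdots t_{a_{2g}})^{4g+2}=t_\delta$ simultaneously displays $t_\delta^{n-n_0}$ itself as such a positive product, so $\psi_n$ is one as well.

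To establish the claim I use that $\mathrm{Mod}(S_g,\partial)$ is generated by Dehn twists along non-separating curves. Writing $\eta=\prod_i t_{c_i}^{\varepsilon_i}$ with each $c_i$ non-separating and $\varepsilon_i\in\{\pm 1\}$, each negative letter is handled separately: by the change of coordinates principle, any non-separating $c_i$ sits inside some chain of $2g$ curves to which the chain relation applies, yielding $t_\delta\cdot t_{c_i}^{-1}=W_{c_i}$ for a positive word $W_{c_i}$ in non-separating Dehn twists. Substituting $t_{c_i}^{-1}=t_\delta^{-1}W_{c_i}$ for each negative letter and using the centrality of $t_\delta$ to collect its inverse powers on the left rewrites $\eta$ in the required form $t_\delta^{-a}\cdot W$ with $W$ a positive product of non-separating Dehn twists.

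The main obstacle I expect is this last algebraic maneuver, in particular verifying that a chain relation is available through any prescribed non-separating curve and bookkeeping the accumulation of $t_\delta^{-1}$'s. The remaining ingredients---Giroux's theorem, stabilization to a fixed genus with one-component binding, centrality of $t_\delta$, and the chain relation itself---are standard and are in fact already deployed in the excerpt.
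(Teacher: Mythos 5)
Your proof is correct and follows essentially the same route as the paper: stabilize to a one-boundary-component open book of genus $g$, factor the monodromy into non-separating Dehn twists, and absorb the ``wrong-sign'' twists using conjugates of the chain relation together with the centrality of $t_\delta$, finally invoking Lemma~\ref{l:monostei}. The only difference is presentational --- you package the key step as a clean algebraic claim about $\mathrm{Mod}(S_g,\partial)$ and check that $\phi^{-1}\phi_{g,n}$ is a positive word in non-separating twists, whereas the paper iteratively multiplies $\phi$ on the left by right-handed twists until it becomes $t_\delta^{n_0}$; the underlying identity $t_\delta\,t_{c}^{-1}=W_c$ (a positive word from a conjugated chain relation with one letter removed) is identical in both.
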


\begin{proof}
Take an open book supporting $(M,\xi)$. If the pages of this open book have more than one boundary component, we can positively stabilize the open book to reduce the number of boundary components to one at the expense of increasing the page genus. Let $g$ denote the genus of the pages of the resulting open book after the stabilizations. Fix an identification of the pages with $S_g$, let $\phi$ denote the monodromy.  

Write the monodromy as a product of Dehn twists about non-separating curves $c_1,c_2,\dots,c_k$ in $S_g$,
\begin{equation}\label{eq:factor}
\phi=t_{c_1}^{\epsilon_1}t_{c_2}^{\epsilon_2}\cdots t_{c_k}^{\epsilon_k}, \text{ where } \epsilon_i\in \{-1,+1\} \text{ for all } i=1,\dots,k
\end{equation}
\noindent The exponents in the above equation emphasize that the factorization $\phi$ can contain both right handed and left handed Dehn twists. We shall make the monodromy agree with $\phi_{g,n_0}$ for some large $n_0\in \mathbb{N}$ by multiplying it by only right handed Dehn twists. If $\epsilon_1=-1$ we simply multiply $\phi$ by $t_{c_1}$ from left to cancel the first term. If $\epsilon_1=+1$, we need to do more work! Since $c_1$ is non-separating, a self-diffeomorphism $\psi$ of $S_g$ sends $a_{2g}$ to $c_1$ 
(see Figure~\ref{fig:Page_Chain}).  
Multiply both sides of the  chain relation \eqref{eq:chain} by $\psi^{-1}$ from right and by $\psi$ from left, and use the fact that $t_\delta$ is in the center of the mapping class group of $S_g$, to  see that
$$(t_{\psi(a_1)}t_{\psi(a_2)}\dots t_{\psi(a_{2g})})^{(4g+2)}=t_\delta.$$ 
Hence using  $(4g+2)(2g)-1$ right handed Dehn twists we can trade $t_{c_1}$ with $t_\delta$. The latter can be put at the end of the factorization \eqref{eq:factor}. Applying the same recipe to the remaining $c_i$'s we get the monodromy $t_\delta^{n_0}$, where $n_0$ is the number of positive exponents appearing in the factorization \eqref{eq:factor}. By adding Dehn twists appearing in the left hand side of the chain relation \eqref{eq:chain} as many times as necessary, we get the monodromy agree with $\phi_{g,n}$ for any $n\geq n_0$.  

During the process we made only two kinds of modifications on the open book: positive stabilizations and adding a right handed Dehn twist to the monodromy. By Lemma~\ref{l:monostei} the required Stein cobordism exists.
\end{proof}
 
\begin{proof}[Proof of Theorem \ref{thm:steinAR}]
Immediately follows from Lemma \ref{lem:sup} and Lemma \ref{lem:cob}.
\end{proof}

\section{Heegaard Floer homology, graded roots and Stein cobordism obstructions }
\label{HF}

To understand the image of $c^+(\xi)$  under $\Phi$ in (\ref{Fi}), we analyze the isomorphism $\Phi$  carefully by considering its factorization as follows:

\begin{equation}\label{eq:dia}
\begin{CD}
HF^+(-M(\Gamma)) @>\Phi_1>> \mathbb{H}^+(\Gamma)  @>\widetilde{\Phi}>> (\mathbb{K}^+(\Gamma))^* @>\Phi_2>> \mathbb{H}(R_\tau) 
\end{CD}
\end{equation}

In order to introduce the notation and recall the definitions, we recite briefly  the construction of $HF^+$, $\mathbb{H}^+$, $\mathbb{K}^+$, and the isomorphisms $\Phi_1$ and  $\widetilde{\Phi}$ in Sections~\ref{HF+}-\ref{K+}
and the graded roots, their homology and the isomorphism  $\Phi_2$  in Sections~\ref{rootR}-\ref{laufer}. We describe how to detect root vertices of graded roots in  $(K^+(\Gamma))^*$  in Section~\ref{detecting} and distinguish the contact invariant in $(K^+(\Gamma))^*$ in Section~\ref{c+}. Finally we prove Theorem~\ref{nocob} in Section~\ref{pf2}.

\subsection{Plus flavor of Heegaard Floer homology}
\label{HF+}
Heegaard Floer theory is a package of prominent invariants for $3$-manifolds, \cite{OS4, OS5} see also \cite{J,M} for recent surveys. To every closed and oriented $3$-manifold $M$, and any \spinc structure $\mathfrak{t}$ on $M$, one associates an $\mathbb{F}:=\mathbb{Z}/2\mathbb{Z}$-vector space $HF^+(M,\mathfrak{t})$ which admits a canonical $\mathbb{Z}/2\mathbb{Z}$ grading.  In the case that the first Chern class of the \spinc structure is torsion (in particular when $M$ is a rational homology sphere), the $\mathbb{Z}/2\mathbb{Z}$-grading lifts to an absolute $\mathbb{Q}$-grading. There is also an endomorphism, denoted by  $U$, decreasing the degree by $2$  so as to make $HF^+(M,\mathfrak{t})$  an $\mathbb{F}[U]$-module. For example when the $3$-manifold is the 3-sphere and $\mathfrak{t}_0$ is its unique \spinc structure, we have $HF^+(S^3,\mathfrak{t}_0)=\mathcal{T}^+_{0}$ where $\mathcal{T}^+$ 
stands for the $\mathbb{F}[U]$-module $\mathbb{F}[U,U^{-1}]/U\mathbb{F}[U]$ 
and $\mathcal{T}^+_{d}$ denotes the one in which 
the lowest degree element is supported in degree $d$. 
More generally when $M$ is a rational homology sphere, it is known that the Heegaard Floer homology is given by $HF^+(M,\mathfrak{t})=\mathcal{T}^+_{d}\oplus HF_{\mathrm{red}}(M,\mathfrak{t})$ where $HF_{\mathrm{red}}(M,\mathfrak{t})$ is a finitely generated $\mathbb{F}$-vector space (and hence a finitely generated $\mathbb{F}[U]$-module) \cite{OS2}. In the sequel, we shall assume that all our $3$-manifolds are rational $3$-spheres.

Heegaard Floer homology groups behave functorially under cobordisms in the following sense: Suppose 
$M_1$ and $M_2$ are connected, closed and oriented $3$-manifolds
and there is a cobordism $W$ from $M_1$ to $M_2$; i.e. the oriented boundary of $W$ is 
$\partial W=(-M_1)\cup M_2$. For every \spinc structure $\mathfrak{s}$ on $W$, let $\mathfrak{t}_1$ and $\mathfrak{t}_2$ denote the induced \spinc structures on $M_1$ and $M_2$ respectively. Then we have an $\mathbb{F}[U]$-module homomorphism  
$$ F_{W,\mathfrak{s}}:HF^+(-M_2,\mathfrak{t}_2)\to HF^+(-M_1,\mathfrak{t}_1).$$

\subsection{The graded module   $\mathbb{H}^+(\Gamma)$}  
\label{H+} The  differential in the chain complex defining Heegaard Floer homology counts certain types of holomorphic disks. This aspect makes the computation of Heegaard Floer homology groups difficult in general. On   the other hand if a three manifold arises from a plumbing construction, there is a purely algebraic description for its Heegaard Floer homology.

 Let $\Gamma$ be a negative definite plumbing graph. 
For every \spinc structure $\mathfrak{t}$ of $M(\Gamma)$, let $\mathbb{H}^+(\Gamma,\mathfrak{t})$ be the subset of the set of functions Hom$(\mathrm{Char}_{\mathfrak{t}}(\Gamma),\mathcal{T}^+)$  satisfying the following property. For every $k\in \mathrm{Char}_\mathfrak{t}(\Gamma)$ and every $j\in\mathcal{J}$, let $n$ be the integer defined by
\begin{equation}\label{e:kay}\chi_{k,j}=\chi_k(b_j)=-(k(b_j)+e_j)/2.\end{equation} 
Then for every positive integer $m$, we require
$$U^{m-\chi_{k,j}}\phi(k+2PD(b_j))=U^m\phi(k) \text{ whenever } \chi_{k,j}\leq0, \text{ and}$$
$$U^{m}\phi(k+2PD(b_j))=U^{m+\chi_{k,j}}\phi(k) \text{ whenever } \chi_{k,j}>0.$$
We define  $\mathbb{H}^+(\Gamma)=\bigoplus_{\mathfrak{t}}\mathbb{H}^+(\Gamma,\mathfrak{t})$, which has readily an $\mathbb{F}[U]$-module structure. Moreover the  conditions
above provide the existence of a suitable grading on it. 
A map $\Phi_1: HF^+(-M(\Gamma),\mathfrak{t})\to \mathbb{H}^+(\Gamma,\mathfrak{t} )$ can be described as follows: Remove a disk from the $4$-manifold $X(\Gamma)$ and regard it as a cobordism $\widetilde{X}$ from $-M(\Gamma)$ to $S^3$. For each characteristic cohomology class $k\in \mathrm{Char}_{\mathfrak{t}}(\Gamma)$ we have a map $F_{\widetilde{X},k}:HF^+(-M(\Gamma),\mathfrak{t})\to HF^+(S^3)=\mathcal{T}^+_{0}$. For any Heegaard Floer homology class $c\in  HF^+(-M(\Gamma))$ and for any characteristic cohomology class $k$, we define $\Phi_1(c)(k):=F_{\widetilde{X},k}(c)$. Thanks to adjunction relations, this map is a well defined homomorphism which is in fact an isomorphism when $\Gamma$ is AR. This was shown by   Ozsv\'ath and Szab\'o for the special case where $\Gamma$ has at most one bad vertex \cite{OS1} and later by N\'emethi in general \cite{N}.

\subsection{The dual of $\mathbb{H}^+(\Gamma)$ } 
\label{K+}
There is a simple description of the dual of $\mathbb{H}^+(\Gamma)$. We use the notation $U^m\otimes k$ to denote a typical element of $\mathbb{Z}^{\geq 0}\times\mathrm{Char}(\Gamma, \mathfrak{t})$.  The elements of the form $U^0\otimes k$ are simply indicated by $k$.  Define an equivalence relation $\sim$ on $\mathbb{Z}^{\geq 0}\times\mathrm{Char}(\Gamma,\mathfrak{t})$ by the following rule. Let $j\in\mathcal{J}$ 
and $n$ be as in (\ref{e:kay}). Then we require
$$U^{m+n}\otimes(k+2PD(b_j))\sim U^m\otimes k \text{ if }n\geq 0,$$
\noindent and 
$$ U^{m}\otimes(k+2PD(b_j))\sim U^{m-n}\otimes k \text{ if }n< 0.$$ 
Let $\mathbb{K}^+(\Gamma,\mathfrak{t})$ denote the set  of equivalence classes. Let $(\mathbb{K}^+(\Gamma,\mathfrak{t}))^*$ denote its dual.  For any non-negative integer $l$, let Ker~$U^{l+1}$ denote the subgroup of  
$\mathbb{H}^+(\Gamma,\mathfrak{t})$ which is the kernel of the multiplication by $U^{l+1}$. 
The map 
$$\widetilde\Phi_l: \mathrm{Ker}\,(U^{l+1})\to\mathrm{Hom} \left (\frac{ \mathbb{K}^+(\Gamma,\mathfrak{t})}{\mathbb{Z}^{\geq l}\times{\mathrm{Char}_{\mathfrak{t}}(\Gamma)}},\mathbb{F} \right ),$$
given by the rule
$$\widetilde{\Phi}_l(\phi)(U^m\otimes k)=(U^m\phi(k))_0 $$
\noindent is an isomorphism for every $l$ \cite[Lemma~2.3]{OS1}. Here
$(\cdot)_0$ denotes the projection to the degree 0 subspace of  $\mathcal{T}^+$. Since every element of $\mathbb{H}^+(\Gamma, \mathfrak{t})$ lies in some $ \mathrm{Ker}\,(U^{l+1})$, the maps $\widetilde\Phi_l$ give rise to an isomorphism $\widetilde{\Phi}:\mathbb{H}^+ (\Gamma,\mathfrak{t})\to (\mathbb{K}^+(\Gamma,\mathfrak{t}))^*$.

\subsection{Graded roots} 
\label{rootR}
Let $R$ be an infinite tree. Denote its  vertex set and edge set  by $\mathcal{V}(R)$ and $\mathcal{E}(R)$ respectively. Let $\chi:\mathcal{V}(R)\to \mathbb{Z}$ satisfy the following properties.

\begin{enumerate}
	\item $\chi(u)-\chi(v)=\pm 1$, if $[u,v]\in \mathcal{E}(R)$.
	\item $\chi(u)>\mathrm{min}\{\chi (v),\chi (w)\}$, if $[u,v]\in \mathcal{E}(R)$, and $[u,w]\in \mathcal{E}(R)$.
	\item $\chi$ is bounded below.
	\item $\chi^{-1}(n)$ is a finite set for every $n$.
	\item $|\chi^{-1}(n)|=1$ for $n$ large enough.
\end{enumerate}
Such a pair $(R,\chi)$ is called a \emph{graded root}. When the grading function $\chi$ is apparent in the discussion we simply drop it from our notation and use $R$ to denote a graded root.

Next we describe a particular graded root produced by a function $\tau:\mathbb{Z}^{\geq 0}\to \mathbb{Z}$ that is non-decreasing after a finite index. For each $i \in \mathbb{Z}^{\geqslant 0}$ consider the graded tree $R_i$ with vertices $\{v_i^m\}_{m\geq \tau(i)}$ and the edges $\{[v_i^m,v_i^{m+1}]\}_{m\geq \tau(i)}$ with grading $\chi(v_i^m)=m$. 
Define an equivalence relation on the disjoint union $\coprod_i R_i$ of trees as follows: 
$v_i^m \asymp v_j^n$ and $[v_i^m,v_i^{m+1}]\asymp [v_j^n,v_j^{n+1}]$ if and only if $m=n$ and 
$m\geq \tau(l)$ for all $l$ between $i$ and $j$. 
Then $R_{\tau}=\coprod_i R_i / \asymp$ is a tree with vertices the equivalence classes $\overline{v_i^m}$ and with the induced  grading $\chi(\overline{v_i^m}) = m$. 

To each graded root $(R,\chi)$ as above, with vertex set $\mathcal{V}$ and edge set $\mathcal{E}$, one can associate a graded $\mathbb{F}[U]$ module $\mathbb{H}(R,\chi)$ as follows.
As a set, $\mathbb{H}(R,\chi)$ is the set of functions $\phi: \mathcal{V}\to \mathcal{T}^+_{0}$ satisfying 
\begin{equation}
\label{deg:Hr}
U\cdot\phi (u)=\phi (v),
\end{equation}
\noindent whenever $[u,v]\in \mathcal{E}$ with $\chi(u)<\chi (v)$. The $U$-action on $\mathbb{H}(R,\chi)$ is defined by the rule $(U\cdot \phi) (v)=U(\phi(v))$. The grading on $\mathbb{H}(R,\chi)$ is defined in the following way. An element $\phi \in \mathbb{H}(R,\chi)$ is homogeneous of degree $d$ if for every $v\in\mathcal{V}$, $\phi(v)$ is homogeneous of degree $d-2 \chi(v)$.

\subsection{Laufer sequences} 
\label{laufer} To simplify our discussion we will work with the canonical \spinc structure from now on; some of the discussion below works for a general \spinc structure, though. 
Our aim is to describe the isomorphism 
$$\Phi_2:(\mathbb{K}^+ (\Gamma, \mathfrak{t}_\mathrm{can}))^*\to \mathbb{H}(R_\tau), $$
discovered by N\'emethi, in a slightly different setup which suits to our needs; what we do
below is nothing but rephrasing the findings in \cite{N}.
Now, the isomorphism $\Phi_2$ is induced by  a bijection  between the dual objects:
$$\Phi_3:\mathcal{V}(R_\tau) \to  \mathbb{K}^+(\Gamma,\mathfrak{t}_\mathrm{can}).$$
Here $R_\tau$ is a graded root associated to a $\tau$ function and $\mathcal{V}(R_\tau)$ is its vertex set. We now describe the construction of the  function $\tau$.

Recursively form a sequence $(k(i))_{i=0}^\infty$ in $\mathrm{Char}(\Gamma, \mathfrak{t}_{\mathrm{can}})$ as follows: start with the canonical class $k(0)=K$. Suppose $k(i)$ has  already been constructed. We find $k(i+1)$ using the algorithm below.
\begin{enumerate}
\item \label{alg:N1} We construct a computational sequence $z_0,z_1,\dotsm z_l$. Let $z_0=k(i)+2\mathrm{PD}(b_0)$. Suppose $z_m$ has been found.  If there exists $j\in \mathcal{J}-\{0\}$ such that 
$$z_m(b_j)=-e_j$$
then we let $z_{m+1}= z_{m}+2\mathrm{PD(b_j)}$. 
\item If there is no such $j$, stop.  Set $l=m$ and $k(i+1)= z_{l}$. 
\end{enumerate}  
The sequence $(k(i))_{i=0}^\infty$ is called the Laufer sequence of the AR graph $\Gamma$ associated with the canonical \spinc structure. This sequence depends on the choice of the distinguished vertex $b_0$ but   is independent from our choice of vertices in step \ref{alg:N1} of the above algorithm.  Define $\chi_{i,0}=\chi_{k(i)}(b_0)$. Note that  since the elements of each  computational sequence described above satisfy $z_m\sim z_{m+1}$ for every $m=0,\dots,l-1$, the vectors $k(i)$ satisfy the following relations in $\mathbb{K}^+(\Gamma,\mathfrak{t}_{\mathrm{can}})$:
\begin{align*}
U^{\chi_{i,0}}\otimes k(i) \sim  k(i+1) & \text{ if }\chi_{i,0}\geq 0,  \text{ and }\\
k(i)\sim U^{-\chi_{i,0}}\otimes k(i+1) & \text{ if }\chi_{i,0}< 0.
\end{align*}
Let  $\tau(n)=\sum_{i=0}^{n-1}\chi_{i,0}$, with $\tau(0)=0$.  It can be shown that there exists an index $N$ such that  $\tau(i+1)\geq \tau(i)$ for all $i\geq N$ \cite[Theorem~9.3(a)]{N}. 
Hence $\tau$ defines a graded root $R_\tau$ and indices beyond $N$ do not give essential information about  $R_\tau$. Moreover, for the canonical Spin$^c$ structure, 
$\tau(N)\geq 2$ and $\tau(i)\leq 1$ for all $i\leq N-1$ \cite[Theorem~6.1(d)]{N}.
In that case one can stop the computation procedure of the Laufer sequence once $\tau\geq 2$. Now with a little effort, one can observe in that the map $\Phi_3:\mathcal{V}(R_\tau)\to \mathbb{K}^+(\Gamma)$ is defined by $\Phi_3(\overline{v_i^m})=U^{m-\tau(i)}\otimes k(i)$
(after the proofs of \cite[Theorem~9.3(b)]{N} and \cite[Proposition~4.7]{N}). One can check that $\Phi_3$ is well defined and injective and moreover $\Phi_3$ is in fact a bijection so it induces an $\mathbb{F}[U]$-module isomorphism $\Phi_2:(\mathbb{K}^+(\Gamma))^*\to \mathbb{H}(R_{\tau})$, which shifts grading by  $(K^2+|\mathcal{J}|)/4$ 

\begin{remark}
The Laufer sequence $\{x(i)\}$ in \cite{N} resides in $L$ while $\{k(i)\}$ here resides in $\mathrm{Char}(\Gamma, \mathfrak{t}_{\mathrm{can}})$. These two Laufer sequences  are related by $$ k(i)=K+2PD(x(i)).$$
\end{remark}

\begin{remark}
The sequence $(\tau(i))_{i=0}^\infty$ contains a lot of redundant elements. The finite subsequence $(\tau(n_i))$ consisting of local maximum and local minimum values of $\tau$ is sufficient to construct the graded root.
\end{remark}

\begin{remark}
An algorithm similar to what we have described above can be utilized to compute the Heegaard Floer homology groups for an arbitrary \spinc structure $\mathfrak{t}$. The only new necessary input is the  distinguished representative of  the \spinc structure inside $\mathrm{Char}(\Gamma,\mathfrak{t})$. Interested reader can consult \cite[Section 5]{N}.  
\end{remark}

\subsection{Detecting root vertices} 
\label{detecting} Ozsv\'ath and Szab\'o used a variation of the above algorithm to determine $\mathrm{Ker}U$ in $(\mathbb{K}^+(\Gamma))^*$ \cite[Section 3.1]{OS1}. Their elements  are also visible in the Laufer sequence. In a graded root $R$, say that a vertex is a \emph{root vertex} if it has valency $1$. The following lemma identifies root vertices of $R_\tau$ with elements of $\mathrm{Ker}U$.
\begin{lemma}\label{l:ktog}
 Given $k\in  \mathbb{K}^+(\Gamma,\mathfrak{t}_{\mathrm{can}})$  such that $k^* \in \mathrm{Ker}U$, there exists an element $k(i_0)$ of the Laufer sequence such that $k\sim k(i_0)$. This element is unique in the following sense: if $k(i_0)\sim k(i_1) \sim k$  and $i_0<i_1$ then $\tau(i)=\tau(i_0)$ for all $i$ satisfying $i_0\leq i\leq i_1$.  As a result $\Phi_3^{-1}(k)$ is the root vertex of the branch in the graded root $R_\tau$ corresponding to $\tau(i_0)$. 
\end{lemma}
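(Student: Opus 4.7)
The strategy is to transport the condition $k^*\in \mathrm{Ker}\,U$ across $\Phi_2$ and read the conclusion off from the combinatorial shape of $R_\tau$. First, $k^*\in \mathrm{Ker}\,U$ in $(\mathbb{K}^+(\Gamma,\mathfrak{t}_{\mathrm{can}}))^*$ is equivalent to the class $[k]$ not lying in the image of the $U$-action on $\mathbb{K}^+$: indeed $(Uk^*)(y)=k^*(Uy)$ vanishes on every $y$ exactly when no $y$ satisfies $Uy=[k]$. Since the formula $\Phi_3(\overline{v_i^m})=[U^{m-\tau(i)}\otimes k(i)]$ immediately shows that $\Phi_3$ intertwines the $U$-action on $\mathbb{K}^+$ with the grading shift $\overline{v_i^m}\mapsto \overline{v_i^{m+1}}$ on $\mathcal{V}(R_\tau)$, the hypothesis becomes: the vertex $\Phi_3^{-1}([k])=\overline{v_i^m}$ is not of the form $U\cdot \overline{v_j^{m-1}}$ for any $j$.

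The combinatorial crux is the observation that this forces $m=\tau(i)$ and $\tau(j)=m$ for every $j$ in the maximal interval $J\ni i$ on which $\tau|_J\leq m$. Indeed, $\overline{v_i^m}\in \mathrm{Im}\,U$ iff there exists $j$ with $v_j^{m-1}$ present (i.e.\ $\tau(j)\leq m-1$) and $\overline{v_j^m}=\overline{v_i^m}$; by the gluing rule the latter equality means exactly $j\in J$. Hence $\overline{v_i^m}\notin \mathrm{Im}\,U$ iff no $j\in J$ has $\tau(j)<m$, which combined with $\tau|_J\leq m$ gives $\tau(j)=m$ for all $j\in J$. Such a plateau is a local minimum of $\tau$, and the analogous analysis of the edge-gluing rule shows that the only edge incident to the corresponding vertex is the upward edge to $\overline{v_i^{m+1}}$, making $\overline{v_i^m}$ a valency-one vertex, i.e.\ a root vertex of $R_\tau$. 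For any $i_0\in J$ we have $m=\tau(i_0)$ and so $[k]=\Phi_3(\overline{v_{i_0}^{\tau(i_0)}})=[U^0\otimes k(i_0)]=[k(i_0)]$, proving $k\sim k(i_0)$.

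For uniqueness, suppose $k(i_0)\sim k(i_1)\sim k$ with $i_0<i_1$. Applying $\Phi_3^{-1}$ to $[k(i_0)]=[k(i_1)]$ gives $\tau(i_0)=\tau(i_1)=m$ together with $v_{i_0}^m\asymp v_{i_1}^m$; the gluing rule forces $[i_0,i_1]\subseteq J$, and the existence analysis already yields $\tau\equiv m$ on $J$, so $\tau(i)=\tau(i_0)$ throughout $[i_0,i_1]$. The identification of $\Phi_3^{-1}(k)$ with the root vertex of the corresponding branch is then immediate from the preceding paragraph. The main obstacle in carrying out this plan is verifying the precise correspondence between the equivalence $\sim$ on $\mathbb{Z}^{\geq 0}\times \mathrm{Char}(\Gamma,\mathfrak{t}_{\mathrm{can}})$ and the gluing $\asymp$ on $\coprod_i R_i$; this compatibility is implicit in N\'emethi's definition of $\Phi_3$ via Laufer computation sequences and is exactly what makes the level $m$ of a vertex well-defined and consistent with the $U$-power appearing in the formula for $\Phi_3$.
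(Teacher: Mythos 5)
Your proof is correct and follows essentially the same route as the paper's: both translate $k^*\in\mathrm{Ker}\,U$ into the statement that $k$ admits no representation $U^n\otimes k'$ with $n\geq 1$, use the bijectivity of $\Phi_3$ to locate $k$ at a vertex $\overline{v_{i_0}^{\tau(i_0)}}$, and then read off valency one from the gluing rule for $R_\tau$. The only difference is expository — you spell out the dualization $(Uk^*)(y)=k^*(Uy)$ and the fact that $\Phi_3$ intertwines $U$ with the grading shift, steps the paper leaves implicit.
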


\begin{proof}
Since $\Phi_3$ is surjective, $k\sim U^{n}\otimes k(i_0)$ for some $i_0,n \in \mathbb{Z}^{\geq 0}$. Since $k^*$ is in $\mathrm{Ker}U$, $k$ does not admit any representation of the form $U^n\otimes k'$ unless $n=0$. Then the vertex $\overline{v_{i_0}^{\tau(i_0)}}$ in the graded root $R_\tau$  must have valency $1$ since otherwise  $v_{i_0}^{\tau(i_0)} \asymp  v_{i}^{\tau(i)+m}$ for some $i,m \in \mathbb{Z}^{\geq 0}$, implying that $k\sim U^m\otimes k(i)$, a case which we have dismissed. The same argument  proves that $\tau$ must be constant between $i_0$ and $i_1$ if $k(i_0)\sim k(i_1) \sim k$.
\end{proof}

\begin{lemma} \label{l:laufdetect}
If $k \in \mathrm{Char}(\Gamma,\mathfrak{t}_{\mathrm{can}})$ satisfies 
$$e_j+2 \leq k(b_j)\leq -e_j-2,\text{ for all } j\in \mathcal{J},$$
\noindent then $k^* \in \mathrm{Ker}U$ and there exists a unique $i_0\in \mathbb{Z}^{\geq 0}$ such that $k=k(i_0)$. Consequently $\Phi_3^{-1}(k)$ is the root vertex of the branch in the graded root $R_\tau$ corresponding to $\tau(i_0)$. Moreover,  the index $i_0$ is the component of the vector $\mathrm{PD}^{-1}(k(i_0)-K)/2$ on $b_0$.
\end{lemma}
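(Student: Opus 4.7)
The plan is to translate the box condition into strict sign conditions on $\chi$, use them to isolate $(0,k)$ in its equivalence class in $\mathbb{Z}^{\geq 0}\times\mathrm{Char}(\Gamma,\mathfrak{t}_{\mathrm{can}})$, and then identify $k$ with a Laufer element via Lemma~\ref{l:ktog}, pinning down the index through its $b_0$-coordinate.

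The first step is a direct reformulation: $k(b_j)\leq -e_j-2$ is exactly $\chi_{k,j}\geq 1$, and the elementary identity $(k-2\mathrm{PD}(b_j))(b_j)=k(b_j)-2e_j$ shows that $k(b_j)\geq e_j+2$ is exactly $\chi_{k-2\mathrm{PD}(b_j),\,j}\leq -1$. The combinatorial heart of the argument is then that these strict inequalities force $(0,k)$ to be alone in its equivalence class. Any generating relation involving $(0,k)$ is indexed either by $(k,j)$, in which case its other endpoint is $(-\chi_{k,j},\,k+2\mathrm{PD}(b_j))$, or by $(k-2\mathrm{PD}(b_j),j)$, in which case its other endpoint is $(\chi_{k-2\mathrm{PD}(b_j),\,j},\,k-2\mathrm{PD}(b_j))$. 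Under the hypothesis both of these first coordinates are $\leq -1$, violating the standing non-negativity and making the putative relation unavailable. Consequently $[k]\in\mathbb{K}^+$ does not lie in the image of multiplication by $U$, which is precisely $k^*\in\mathrm{Ker}\,U$. Lemma~\ref{l:ktog} then furnishes an index $i_0$ with $(0,k)\sim(0,k(i_0))$, and the singleton property upgrades this to the strict equality $k=k(i_0)$ in $\mathrm{Char}(\Gamma,\mathfrak{t}_{\mathrm{can}})$; the claim that $\Phi_3^{-1}(k)$ is the root vertex at $\tau(i_0)$ is then part of that same lemma.

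For the remaining claim about $i_0$ and its uniqueness, I would pass to $L$-coordinates via $x(i):=\mathrm{PD}^{-1}(k(i)-K)/2$ and track the $b_0$-component along Laufer's algorithm from Section~\ref{laufer}: its initialization $z_0=k(i)+2\mathrm{PD}(b_0)$ contributes exactly one $b_0$ to $x(i+1)-x(i)$, and every subsequent step adds only $b_j$ with $j\neq 0$ and so leaves the $b_0$-coordinate unchanged. Starting from $x(0)=0$, induction yields $(x(i))_{b_0}=i$, which both identifies $i_0$ as the $b_0$-coefficient of $\mathrm{PD}^{-1}(k-K)/2$ and, since $i\mapsto(x(i))_{b_0}$ is then injective, simultaneously delivers the uniqueness.

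The main obstacle is the singleton property in the middle paragraph; once the $\chi$ inequalities are in place, the rest reduces to bookkeeping and direct appeals to Lemma~\ref{l:ktog}.
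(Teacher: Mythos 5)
Your proof is correct, and its logical skeleton matches the paper's: establish that $[k]$ is a singleton in $\mathbb{K}^+$, apply Lemma~\ref{l:ktog}, and identify the index via the $b_0$-coordinate. The difference is that where the paper simply cites \cite[Proposition~3.2]{OS1} for the ``good full path of length $1$'' property and \cite[Lemma~7.6(a)]{N} for the index formula, you re-derive both from scratch. Your derivation of the singleton property is the right elementary argument: the box condition $k(b_j)\leq -e_j-2$ gives $\chi_{k,j}\geq 1$, and $k(b_j)\geq e_j+2$ gives $\chi_{k-2\mathrm{PD}(b_j),j}\leq -1$, so the $U$-shift in any generating relation adjacent to $(0,k)$ would have to land a representative with negative $U$-exponent, which is disallowed; hence $\{(0,k)\}$ is its own equivalence class, which is precisely $k^*\in\ker U$. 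Note that your bookkeeping implicitly uses the $\mathbb{K}^+$-relation in the form consistent with the definition of $\mathbb{H}^+(\Gamma)$ and with the displayed Laufer relations (namely $U^{m-\chi_{k,j}}\otimes(k+2\mathrm{PD}(b_j))\sim U^m\otimes k$ for $\chi_{k,j}\leq 0$), which is the correct convention; the displayed generating relation for $\sim$ in the text of Section~\ref{K+} has the sign of $n$ flipped relative to this, and your argument would not go through under a literal reading of that display. Your index computation --- tracking that each Laufer step contributes exactly one $b_0$ to $x(i+1)-x(i)$, so $(x(i))_{b_0}=i$ --- is exactly the content of the N\'emethi lemma the paper cites, and also yields the uniqueness of $i_0$ cleanly. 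In short: same route, but self-contained where the paper defers to external references.
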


\begin{proof}
By \cite[Proposition~3.2]{OS1}, $k$ forms a good full path of length $1$ so it does not admit any representation of the form $U^n\otimes k'$ with $n\geq 1$, and it is the only element in its $\sim$ equivalence class. By Lemma~\ref{l:ktog}, we  must have $k= k(i_0)$ for some element $k(i_0)$ of the Laufer sequence. The claim about the index is in fact satisfied by every element of the Laufer sequence \cite[Lemma 7.6 (a)]{N}.
\end{proof}

\subsection{Contact invariant} 
\label{c+} To any co-oriented contact structure $\xi$ on a $3$-manifold $M$, one associates an element $c^+(\xi)\in HF^+(-M)$. This element is an invariant of the contact structure and it satisfies the following properties:

\begin{enumerate}
\item $c^+(\xi)$ lies in the summand $HF^+(-M,\mathfrak{t}_\xi)$ where $\mathfrak{t}_\xi$ is the \spinc structure uniquely determined  by the homotopy class of $\xi$.
\item $c^+(\xi)$ is homogeneous of  degree $-d_3(\xi)-1/2$, where $d_3(\xi)$ is the $3$-dimensional invariant of $\xi$ of Gompf \cite{Go}.
\item When $\xi$ is overtwisted, $c^+(\xi)=0$.
\item When $\xi$ is Stein fillable $c^+(\xi)\neq 0$.
\item We have $U(c^+(\xi))=0$.
\item $c^+(\xi)$ is natural under Stein cobordisms.
\end{enumerate}

Our aim is to understand where the contact invariant falls under the isomorphism described in \eqref{eq:dia}. This could be difficult for a general contact structure. We need a certain type of compatibility of the contact structure with the plumbing. 

\begin{definition}  
Let $\Gamma$ be a plumbing graph. Suppose $\xi$ is a contact structure on $M(\Gamma)$. We say that $\xi$ is compatible with $\Gamma$ if  the following are satisfied:
\begin{enumerate}
\item The contact structure $\xi$ admits a  Stein filling whose total space, possibly after finitely many blow-ups, is $X(\Gamma)$. 
\item The induced \spinc structure agrees with that of the canonical class on $M(\Gamma)$. (This condition is automatically satisfied when $M(\Gamma)$ is an integral homology sphere.)
\end{enumerate}
\end{definition}
Note that  canonical contact structures of singularities are compatible with the dual resolution graphs. 
Furthermore if $\xi$ is compatible with $\Gamma$ then $X(\Gamma)$ is a strong symplectic filling for $\xi$.
We shall denote by $J$ and $c_1(J)$, the corresponding almost complex structure on $X(\Gamma)$  and its first Chern class respectively. 
\begin{theorem}\label{cagri}(\cite[Proposition~1.2]{K}) 
Let $\xi$ be compatible with $\Gamma$. Then we have $\widetilde{\Phi}\circ \Phi_1(c^+(\xi))=(c_1(J))^*$.
\end{theorem}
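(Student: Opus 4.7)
I plan to unpack both sides of the claimed identity into functionals on $\mathbb K^+(\Gamma,\mathfrak t_{\mathrm{can}})$ and then invoke naturality of the contact invariant under the Stein cobordism provided by the compatibility hypothesis. From the descriptions of $\Phi_1$ and $\widetilde\Phi$ in Sections~\ref{H+}--\ref{K+},
\[
(\widetilde\Phi\circ\Phi_1(c^+(\xi)))(U^m\otimes k)=\bigl(U^m\cdot F_{\widetilde X,k}(c^+(\xi))\bigr)_0,
\]
where $(\cdot)_0$ denotes the projection onto the degree-zero summand of $\mathcal T^+_0=HF^+(S^3)$; the target $(c_1(J))^*$ is the indicator of the equivalence class $[c_1(J)]$ in $\mathbb K^+(\Gamma,\mathfrak t_{\mathrm{can}})$. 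Since $c^+(\xi)$ lies in the canonical Spin$^c$ summand $HF^+(-M(\Gamma),\mathfrak t_{\mathrm{can}})$, both sides vanish automatically on $U^m\otimes k$ with $k$ outside $\mathrm{Char}(\Gamma,\mathfrak t_{\mathrm{can}})$, so it suffices to verify that the right-hand side above equals $1$ precisely when $U^m\otimes k\sim c_1(J)$ in $\mathbb K^+(\Gamma,\mathfrak t_{\mathrm{can}})$ and vanishes otherwise.

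The core computation is $F_{\widetilde X,c_1(J)}(c^+(\xi))=1\in\mathcal T^+_0$. By compatibility, there is a Stein filling $W$ of $(M(\Gamma),\xi)$ with $X(\Gamma)$ obtained from $W$ by finitely many blowups, and $J$ restricting to the Stein almost complex structure $J|_W$ on $W$. Viewing $W\setminus\mathrm{int}(B^4)$ as a Stein cobordism from $(S^3,\xi_{\mathrm{std}})$ to $(M(\Gamma),\xi)$, naturality of the contact invariant (Property~(6) of Section~\ref{c+}) yields $F_{-W,\mathfrak s_J}(c^+(\xi))=c^+(\xi_{\mathrm{std}})$, which is the degree-zero generator of $\mathcal T^+_0$. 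Iterating the Ozsv\'ath--Szab\'o blowup formula---using that $c_1(J)$ pairs to $\pm 1$ with every exceptional $(-1)$-sphere introduced by the blowups---then promotes this identity to $F_{\widetilde X,c_1(J)}(c^+(\xi))=1\in\mathcal T^+_0$, so $\widetilde\Phi\circ\Phi_1(c^+(\xi))$ evaluates to $1$ on $c_1(J)$.

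For $k=c_1(J)+2\mathrm{PD}(a)$ in the $L$-orbit of $c_1(J)$, I would propagate this value along the orbit using the adjunction relations in $\mathbb H^+(\Gamma)$ recalled in Section~\ref{H+}. Those relations are dual to the defining relations of $\mathbb K^+(\Gamma,\mathfrak t_{\mathrm{can}})$ and determine $\Phi_1(c^+(\xi))(k)$ from $\Phi_1(c^+(\xi))(c_1(J))$ up to an explicit $U$-shift. A short bookkeeping check should then confirm that the resulting functional takes the value $1$ on every representative of $[c_1(J)]$ and $0$ on every other equivalence class, matching $(c_1(J))^*$ exactly. The main obstacle is the blowup and orientation bookkeeping: verifying that the Spin$^c$ structure on $\widetilde X$ picked out by $c_1(J)$ restricts to $\mathfrak s_J$ on $W$ once the exceptional divisors are undone, and that Gompf's formula for $d_3(\xi)$ together with the cobordism degree-shift formula place the image of $c^+(\xi)$ exactly in degree $0$ of $\mathcal T^+_0$. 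Once these conventions are pinned down the two halves of the argument combine to yield $\widetilde\Phi\circ\Phi_1(c^+(\xi))=(c_1(J))^*$.
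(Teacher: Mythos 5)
Your overall plan—compute $F_{\widetilde X,c_1(J)}(c^+(\xi))$ directly, handle blowups with the blow-up formula, and then spread the answer over the rest of $\mathrm{Char}(\Gamma,\mathfrak t_{\mathrm{can}})$—has the right shape and matches the structure of the actual argument, but it has a real gap at its central step. You invoke Property~(6) (naturality of $c^+$ under Stein cobordisms) to conclude $F_{-W,\mathfrak s_J}(c^+(\xi))=c^+(\xi_{\mathrm{std}})$. Naturality in the sense of that list (coming from Ozsv\'ath--Szab\'o) is a statement about the \emph{total} cobordism map, i.e.\ the sum of $F_{W,\mathfrak s}$ over all $\mathrm{Spin}^c$ structures $\mathfrak s$ on $W$. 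It does \emph{not} tell you which individual $\mathrm{Spin}^c$-component carries the image; a priori the image could be distributed over several $\mathfrak s$. What you need is exactly the $\mathrm{Spin}^c$-refined statement: that $F_{W\setminus B^4,\mathfrak s}(c^+(\xi))$ is the bottom generator of $\mathcal T^+_0$ when $\mathfrak s=\mathfrak s_J$ and vanishes for every other $\mathfrak s$. That refinement is Plamenevskaya's theorem, and it is precisely what the paper cites (reference \cite{P}, quoted as the displayed formula in the proof). You are implicitly using it without naming it, so as written your step 1 is circular or at least unsupported.

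Once Plamenevskaya's theorem is in hand, your second stage (propagating along the $L$-orbit via the adjunction relations defining $\mathbb H^+$) is correct but redundant: Plamenevskaya already gives the vanishing $F_{\widetilde X,k}(c^+(\xi))=0$ for every $k\neq c_1(J)$ in $\mathrm{Char}(\Gamma,\mathfrak t_{\mathrm{can}})$, and the values $\widetilde\Phi\circ\Phi_1(c^+(\xi))(U^m\otimes k)=(U^m F_{\widetilde X,k}(c^+(\xi)))_0$ then read off as $1$ exactly on the class of $c_1(J)$ and $0$ elsewhere, which is the definition of $(c_1(J))^*$. So the fix is small—replace the appeal to Property~(6) with an explicit appeal to Plamenevskaya's theorem (together with the blow-up formula as you did)—but without that citation the argument does not stand on its own.
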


\begin{proof}
If $X(\Gamma)$ has no $-1$ spheres,  the total space of the Stein filling is $X(\Gamma)$ itself. Then result is an easy consequence of the definitions of the maps $\Phi_1$ and $\widetilde{\Phi}$ and  Plamenevskaya's theorem \cite{P} which says that,
\begin{equation}\label{eq:pla} F_{\widetilde{X},k}(c^+(\xi))=\left \{
\begin{tabular}{lr} 
1& \text{ if } $k \sim c_1(J)$,\\
0 & \text{ if } $k\not \sim c_1(J)$.
\end{tabular}\right . 
\end{equation}

In the case that $X(\Gamma)$ contains $-1$ spheres, we blow them down until we get a Stein filling of $\xi$. Applying Plamenevskaya's theorem there and using blow-up formulas we see that \eqref{eq:pla} still holds. 
\end{proof}

\begin{corollary}\label{cor:can}
 We have $\widetilde{\Phi}\circ \Phi_1(c^+(\xi_\mathrm{can}))=K^*$ where $K$ is the canonical class.
\end{corollary}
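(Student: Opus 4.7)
The plan is to apply Theorem~\ref{cagri} directly, since by construction the canonical contact structure $\xi_\mathrm{can}$ is already compatible with the dual resolution graph $\Gamma$. Indeed, any good resolution of the normal surface singularity with dual graph $\Gamma$ produces a complex (and thus Stein, after suitable modification) filling of $\xi_\mathrm{can}$ whose underlying $4$-manifold is either $X(\Gamma)$ itself or blows down to a Stein filling through a sequence of $(-1)$-curves, matching the definition of compatibility stated just before Theorem~\ref{cagri}. Let $J$ denote the almost complex structure on $X(\Gamma)$ induced by the complex resolution. Theorem~\ref{cagri} immediately yields
\[
\widetilde{\Phi}\circ \Phi_1(c^+(\xi_\mathrm{can})) = (c_1(J))^*,
\]
so the content of the corollary is the identification $c_1(J) = K$ as elements of $L'$.

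To establish this identification, I would evaluate $c_1(J)$ on each generator $b_j$ of $L$. Since each vertex corresponds to a smoothly embedded holomorphic sphere of self-intersection $e_j$ in $X(\Gamma)$, the classical adjunction formula
\[
2g - 2 = (b_j, b_j) + K_X\cdot [b_j]
\]
with $g = 0$ gives $K_X \cdot [b_j] = -e_j - 2$ for every $j \in \mathcal{J}$. This coincides exactly with the defining relation $K(b_j) = -e_j - 2$ of the canonical class from Section~\ref{plumbing}, so under the convention (as in \cite{N}) identifying the first Chern class of the canonical $\mathrm{Spin}^c$ structure of $J$ with the algebro-geometric canonical class $K_X$, one obtains $c_1(J) = K$. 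If $X(\Gamma)$ is only a blow-up of the actual Stein filling, the identification still holds after applying the standard blow-up formula for Chern classes, which leaves the evaluation on the non-exceptional classes unchanged.

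The only genuinely delicate point is matching the sign conventions that relate $c_1$ of the tangent bundle, $c_1$ of the canonical $\mathrm{Spin}^c$ structure, and the algebro-geometric canonical divisor $K_X$; once this is pinned down by agreement with the conventions used in Theorem~\ref{cagri} and in \cite{N, OS1}, the corollary reduces to a single application of adjunction on each exceptional sphere. No further Heegaard Floer machinery beyond what was already deployed in the proof of Theorem~\ref{cagri} is needed.
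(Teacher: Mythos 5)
Your proposal is correct and matches the paper's intent: the corollary is presented without proof precisely because it is the immediate specialization of Theorem~\ref{cagri} to $\xi_{\mathrm{can}}$, once one knows that the resolution of the singularity furnishes a compatible (possibly blown-up) Stein filling with $c_1(J)=K$. Your adjunction computation $K_X\cdot b_j=-e_j-2$ is exactly the identification the paper takes for granted, with the sign convention matching the one used in \cite{K,N} (as you correctly flag and as is borne out by the example $c_1(J_+)=K$ on $\Sigma(2,3,11)$ in Section~\ref{examples}).
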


\subsection{Proof of Theorem~\ref{nocob}}
\label{pf2}
By Corollary~\ref{cor:can},  $\widetilde{\Phi}\circ \Phi_1(c^+(\xi_\mathrm{can}))=K^*$ where $K$ is the canonical class which is also the first element $k(0)$ of the Laufer sequence. Under the correspondence described in Lemma~\ref{l:ktog}, this element is associated with the root vertex of the branch of $\tau(0)$.  We will be  done once we prove this branch has length one, which implies that $\Phi(c^+(\xi_{\mathrm{can}}))$ is not in the image of $U^n$ for any $n> 0$. Since $\Phi$ is an $\mathbb{F}[U]$ module isomorphism, the same must hold for $c^+(\xi_{\mathrm{can}})$.

By definition, $\tau(0)=0$, and a direct computation shows $\tau(1)=1$. Now, by \cite[Theorem~6.1(d)]{N} 
it follows that $\#\tau^{-1}(m)=1$ whenever $m\geq 1$; equivalently if $\tau(n)>1$ for some $n$ 
then $\tau$ is increasing beyond $n$. Then we have two cases: either $\tau$ is always 
increasing or there is some $n$ for which $\tau(n)<1$ and $\tau(j)=1$ for $1\leq j \leq n$. In the former case we get $\mathbb{H}(R_\tau)=\mathcal{T}^+_0$. However this is equivalent to having $\Gamma$ rational (\cite[Theorem~6.3]{N}), which we have dismissed by assumption.
In the latter case, there are more than one root of the  tree $R_{\tau}$  which have non-positive degree. 
In particular the branch corresponding $\tau(0)$ has length $1$ (as illustrated in Figure~\ref{fig:tree}), and therefore we have $\sigma(\xi)=0$.

\begin{figure}[h]
	\includegraphics[width=0.40\textwidth]{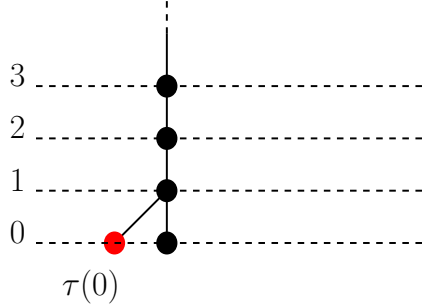}
	\caption{If $\Gamma$ is proper AR, this tree embeds into the graded root $R$ where the red vertex is identified with $\tau (0)$.}
	\label{fig:tree}
\end{figure}

Finally, there can be no Stein cobordism from $(M(\Gamma),\xi)$ to $(M',\eta)$ if $\sigma(\eta)<0$. This follows from the naturality of the invariant $c^+$ under Stein cobordisms and the fact that 
the invariant $\sigma$ increases under a Stein cobordism \cite[Theorem~1.5]{K}. To finish the proof
we recall that for each of the cases in the theorem, $\sigma(\eta)=-\infty$: 
this is immediate if $c^+(\xi)=0$;
for the case $\eta$ is a planar contact structure, this claim is just \cite[Theorem~1.2]{OSS}; 
for the case when $M'$ is the link of a rational singularity, this is a consequence of  \cite[Theorem~6.3]{N}.
\hfill $\Box$

\section{Examples}\label{examples}
The main result of our paper concerns canonical contact structures but with the techniques we developed in this paper we can in fact compute the $\sigma$ invariant of any contact structure which is compatible with an AR plumbing graph. Here we give a few examples. 

\subsection{Contact structures on $\Sigma(2,3,11)$} We start with a simple but an instructive example. The Brieskorn sphere $\Sigma(2,3,11)$ is the boundary of the graph in Figure~\ref{fig:plumbing} 
which we denote by $\Gamma$. We index the vertices $\{b_0,b_1,\ldots,b_8\}$ 
of $\Gamma$ so that
$b_0$ is the one with adjacency 3 and $b_8$ is the one with weight $-3$.
We know that $\Gamma$ is  proper AR since $(2,3,11)$ are pairwise relatively prime.

\begin{figure}[h]
	\includegraphics[width=0.40\textwidth]{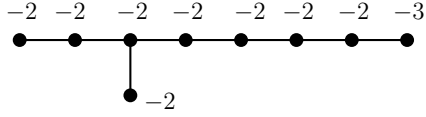}
	\caption{Plumbing graph of the Brieskorn sphere $\Sigma(2,3,11)$.}
	\label{fig:plumbing}
\end{figure}

One may find compatible Stein structures on the $4$-manifold $X(\Gamma)$ by choosing  the Legendrian
attaching circles of $2$-handles corresponding to vertices with the prescribed intersection matrix
such that the smooth framing is one less than the Thurston-Bennequin framing.  Note that there is a 
unique way of doing this for each $(-2)$-framed vertex, but the $(-3)$-framed $2$-handle can be 
Legendrian realized in two different ways. We fix an orientation on this vertex and distinguish these 
two cases according to their rotation numbers \cite[Theorem~1.2]{LM}. Hence $X(\Gamma)$ has two natural distinct Stein structures  whose Chern classes are $k_{\pm}=[0,\dots,0,\pm 1]$ where we write an element 
$k\in H^2(X(\Gamma),\mathbb{Z})$ in the form $[k(b_1),\dots,k(b_s)]$ in the dual basis. Denote the corresponding contact structures on boundary $\xi_{\pm }$.

\begin{figure}[h]
	\includegraphics[width=1.0\textwidth]{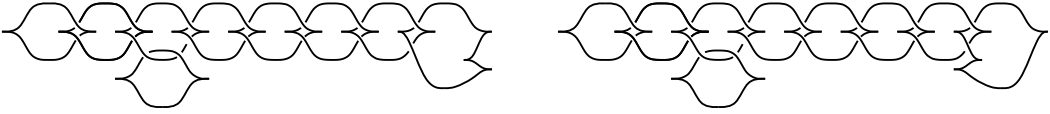}
	\caption{Stein handlebody pictures of $\xi_{\pm }$.}
	\label{fig:legplumbing}
\end{figure}

The Heegaard Floer homology of $\Sigma(2,3,11)$ is well known; we can readily compute it here
by considering the Laufer sequence $k(n)$ and the corresponding values $\tau(n)$. Moreover from Lemma~\ref{l:laufdetect},  we know that $k_\pm$ will appear in the Laufer sequence. 
The first several $k(n)$ and $\tau(n)$ are as follows: 
\begin{align*}
k_+= k(0)&=(0,0,0,0,0,0,0,0,1), \quad &\tau(0)=0 \\
k(1)&=(2,-2,0, -2, 0, 0, 0, 0, -3),\quad &\tau(1)=1 \\
k(2)&=(2, 0, -2, 0, 0, 0, 0, -2, -1), \quad &\tau(2)=1 \\
k(3)&=(2, -2, 0, 0, 0, 0, -2, 0, -1), \quad &\tau(3)=1 \\
k(4)&=( 2, 0, 0, -2, 0, -2, 0, 0, -1),\quad  &\tau(4)=1 \\
k(5)&=(4, -2, -2, 0, -2, 0, 0, 0, -1), \quad &\tau(5)=1 \\
k_-=k(6)&=(0, 0, 0, 0, 0, 0, 0, 0, -1), \quad &\tau(6)=0 
\end{align*}
Moreover it can be proven that $\tau(n+1)\geq \tau(n)$ for every $n\geq 6$. An indirect way to do this is by observing that $\tau(13)=2$ and $\tau(n)=1$ for $7\leq n <13$ and then  employing
\cite[Theorem~6.1(d)]{N} to conclude that $\tau$ is increasing for $n\geq 13$. Hence we obtain
the graded root $R_{\tau_K}$ as shown in Figure~\ref{fig:graded2311}. By the previous discussion, the  contact invariants $c^+(\xi_{\pm})$ correspond to the two roots of the tree. As a result, we have $\sigma(\xi_\pm)=0$. Notice that $k_+=K$, and so $c^+(\xi_+)=c^+(\xi_\mathrm{can})$. 

\begin{figure}[h]
	\includegraphics[width=0.60\textwidth]{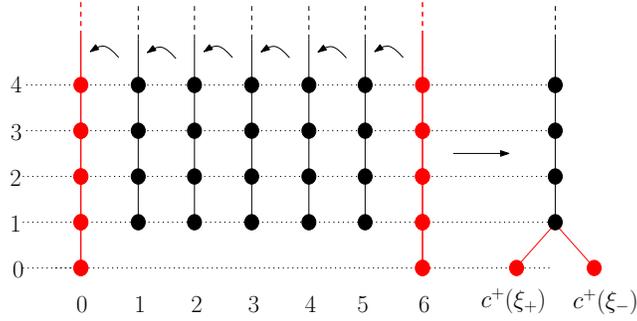}
	\caption{The graded root of $\Sigma(2,3,11)$ with the canonical \spinc structure. }
	\label{fig:graded2311}
\end{figure}
Finally we determine the Heegaard Floer homology by computing the homology of the graded root and shifting the degree by $(K^2+9)/4$. We get that $HF^+(-\Sigma(2,3,11))=\mathcal{T}^+_{(-2)}\oplus \mathbb{F}_{(-2)}$ and $c^+(\xi_\pm)$ are two distinct elements of degree $-2$ which project non-trivially to the reduced Floer homology.   

\subsection{Stein fillable contact structures of arbitrarily large $\sigma$} Consider the infinite family of Brieskorn spheres $M_n=\Sigma(3,3n+1, 9n+2)$, $n\geq 1$ given by plumbing graph $\Gamma_n$ in Figure~\ref{fig:plumpqpq-1}. For every $m=0,\dots,n-1$ we have the  Stein structure $J_m$ corresponding to the Legendrian handle attachments along Legendrian unknots corresponding to the vertices of $\Gamma_n$ with framing $tb-1$. In order to get the correct framing we have to stabilize $(-3)$-and $(-n-1)$-framed unknots once and $n-1$ times respectively. To get the Stein structure $J_m$  we do one left stabilization on $(-3)$-framed unknot, and  $m$ right  and  $n-m-1$ left stabilizations on $(-n-1)$-framed unknot see Figure~\ref{fig:legplumbing2}. Orienting each unknot clockwise we fix a basis for $b_0,\dots,b_{9n+5}$ for $L$ using the indices as show in Figure~\ref{fig:plumpqpq-1}. Then the number $c_1(J_m)(b_j)$ is given by the rotation number of the Legendrian unkot corresponding to the $j$th vertex.  Hence we have $c_1(J_m)=[0,1,0,0,n-2m-1,0,\dots,0]$. Clearly for every $m=0,\dots,n-1$, the contact structure $\xi_m$ induced by $J_m$ is compatible with $\Gamma_n$. Hence by Lemma~\ref{l:laufdetect}, each $c_1(J_m)$ appears in the Laufer sequence of $\Gamma_n$. In fact we have $c_1(J_m)=k(i_m)$, where $i_m=3m(9n+2)$.

\begin{figure}[h]
	\includegraphics[width=.50\textwidth]{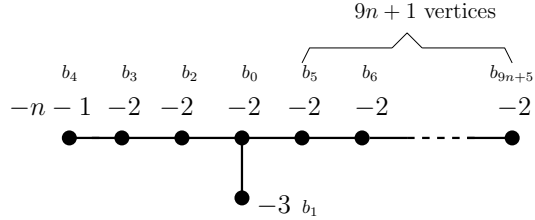}
	\caption{Plumbing graph  $\Gamma_n$.}
	\label{fig:plumpqpq-1}
\end{figure}

\begin{figure}[h]
	\includegraphics[width=0.70\textwidth]{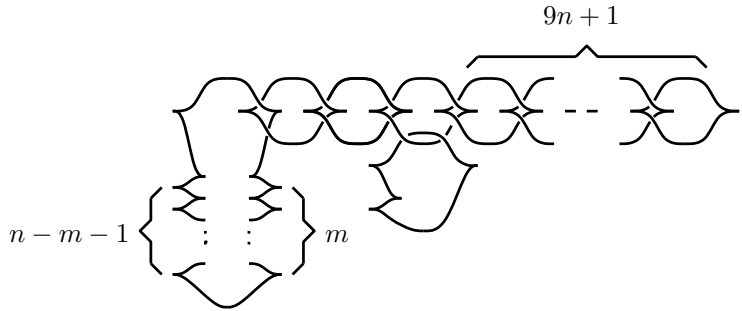}
	\caption{Stein handlebody diagram of $J_m$}
	\label{fig:legplumbing2}
\end{figure}

We need to decide the branch lengths of the corresponding root vertices in the graded root. Borodzik and N\'emethi worked out the combinatorics of the $\tau$ function. 
\begin{lemma}\cite[Propositon 4.2]{BN}  Suppose $p$ and $q$ are relatively prime positive integers. Let $\mathcal{S}_{p,q}$ be the semigroup of $\mathbb{N}$ generated by $p$ and $q$ including $0$. Let $\delta=(p-1)(q-1)/2$. Consider the function $\tau:\mathbb{Z}^{\geq 0}\to \mathbb{Z}$ associated to the Brieskorn sphere $\Sigma(p,q,pq-1)$.  

The function $\tau$ attains its local minima at $a_t=t(pq-1)$ for $0\leq t \leq 2\delta -2$, and local maxima at $A_t=tpq+1$ for $0\leq t\leq 2\delta -3$. Moreover for any $0\leq t\leq 2\delta -3$, one has
\begin{align*}
\tau(A_t)-\tau (a_{t+1})&=\#\{s\not \in \mathcal{S}_{p,q}\, : \, s\geq n+2 \}>0, \\
\tau(A_t)-\tau (a_{t})&=\#\{s\in \mathcal{S}_{p,q}\, : \, s\leq n \}>0. 
\end{align*} 

\end{lemma}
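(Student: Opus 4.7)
Since this Lemma is quoted verbatim from \cite[Proposition~4.2]{BN}, a proof in the present paper amounts to reproducing the Borodzik--N\'emethi argument in the language of the Laufer algorithm from Section~\ref{laufer}. My plan would be to fix a concrete star-shaped plumbing graph $\Gamma$ for $\Sigma(p,q,pq-1)$: one can take a central vertex $b_0$ of weight $-1$ with three legs whose continued-fraction expansions come from the Seifert invariants of the fibration (equivalently, from Neumann's calculus applied to $+1$-surgery on the $(p,q)$-torus knot), with $b_0$ serving as the distinguished bad vertex needed to run the AR algorithm. With this choice the Laufer sequence $(k(i))_{i\geq 0}$ becomes a discrete dynamical system: each step starts from $k(i)+2\mathrm{PD}(b_0)$ and then propagates deterministically down the three linear legs, so one only needs to keep track of a triple of integers indexed by the leg lengths.

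Next I would identify $\chi_{i,0}$ with the signed increment $\tau(i+1)-\tau(i)$ and show, by reducing the central coordinate of $k(i)$ modulo $pq$ and $pq-1$, that $\chi_{i,0}$ changes sign exactly at the prescribed indices: going from non-positive to positive at $a_t=t(pq-1)$ and from non-negative to non-positive at $A_t=tpq+1$, which produces the claimed local minima and maxima. With this set up, the two difference formulas become
$$\tau(A_t)-\tau(a_t)=\sum_{i=a_t}^{A_t-1}\chi_{i,0},\qquad \tau(A_t)-\tau(a_{t+1})=-\sum_{i=A_t}^{a_{t+1}-1}\chi_{i,0},$$
and the remaining task is to interpret each summand combinatorially. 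The key observation, inherited from the singularity-theoretic origin of the problem, is that each "pass" of the Laufer algorithm between consecutive extrema can be matched bijectively with integers in (or respectively outside) the semigroup $\mathcal{S}_{p,q}$ which satisfy the stated range condition (i.e.\ $s\leq n$ or $s\geq n+2$, where $n=n(t)$ is the height of the computation at step $t$). This bijection is essentially the Riemann--Roch/adjunction identity for the plane curve singularity $u^p+v^q=0$, whose $\delta$-invariant is exactly $(p-1)(q-1)/2$.

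The main obstacle is precisely the construction of that bijection: one has to verify, step by step, that the triples of leg-coordinates produced by the Laufer algorithm visit each relevant semigroup element (or each relevant gap) exactly once between successive extrema, which requires a careful case analysis tied to the arithmetic of $p$ and $q$. Once the bijection is in place the positivity statements $\tau(A_t)-\tau(a_t)>0$ and $\tau(A_t)-\tau(a_{t+1})>0$ for $0\leq t\leq 2\delta-3$ are immediate, since for those values of $t$ both the semigroup set and its complement contribute a nonempty list of elements in the relevant range. This completes the strategy; a detailed execution would reproduce the bookkeeping of \cite{BN}, and we therefore content ourselves with the above reference.
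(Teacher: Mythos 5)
The paper offers no proof of this lemma---it is imported verbatim from \cite[Proposition~4.2]{BN} and used as a black box---so there is no internal argument against which to compare yours; your sketch, which ultimately defers the decisive combinatorial bijection to \cite{BN} as well, is therefore consistent with how the paper treats the result. Your outline (star-shaped plumbing for $\Sigma(p,q,pq-1)$ with distinguished central vertex, the identification $\tau(i+1)-\tau(i)=\chi_{i,0}$, sign changes of $\chi_{i,0}$ at $a_t$ and $A_t$, and the telescoping sums matched against semigroup elements and gaps via the $\delta$-invariant of $u^p+v^q=0$) is a reasonable reconstruction of the Borodzik--N\'emethi strategy, but, as you say yourself, the one step that actually carries the proof---the step-by-step correspondence between Laufer iterations and elements of $\mathcal{S}_{p,q}$ or its complement in the relevant range---is left to the reference, so what you have is a roadmap rather than a proof. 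One small point a self-contained write-up would need to fix: the symbol $n$ in the conditions $s\geq n+2$ and $s\leq n$ is undefined in the lemma as quoted (it is a $t$-dependent quantity in the indexing of \cite{BN}); you gesture at this by writing $n=n(t)$, but pinning it down is part of the bookkeeping you are deferring.
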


Applying the above lemma for the case $p=3$ and $q=3n+1$, we see that $\sigma(\xi_m)=-m$. Hence by choosing $n$ and $m$ appropriately we realize any negative integer as the $\sigma$ invariant of a contact structure. Of course $\xi_m$ cannot be isomorphic to the canonical contact structure unless $m=0$. Therefore the following problem is natural.

\begin{question}
Is it possible to realize any negative integer as the $\sigma$ invariant of the canonical contact structure of a singularity?
\end{question}
\section{Acknowledgments}
The first author  is supported by a TUBITAK grant BIDEB 2232 No: 115C005. The second author is grateful to the Institut Camille Jordan, Universit\'{e} Lyon 1, where part of this work was completed.
\bibliography{References}
\bibliographystyle{plain}

\end{document}